\documentclass[a4paper,11pt]{amsart}

\usepackage[utf8]{inputenc}
\usepackage[T1]{fontenc}

\usepackage{amsmath,amssymb,amsfonts,amsthm}
\usepackage{hyperref}
\hypersetup{colorlinks=true, urlcolor=blue, citecolor=blue, linkcolor=blue}
\usepackage{booktabs}
\usepackage{array}
\usepackage{float}
\usepackage{enumitem}

\theoremstyle{plain}
\newtheorem{theorem}{Theorem}[section]

\newtheorem{corollary}[theorem]{Corollary}
\newtheorem{proposition}[theorem]{Proposition}

\theoremstyle{definition}
\newtheorem{definition}{Definition}[section]
\newtheorem{example}{Example}[section]

\theoremstyle{remark}
\newtheorem{remark}{Remark}[section]

\newcommand{\softnull}{\widetilde{\Phi}}
\newcommand{\softabs}{\widetilde{X}}
\newcommand{\stau}{\widetilde{\tau}}
\newcommand{\cla}{\mathrm{cl}_{\mathfrak{a}}}
\newcommand{\inte}{\mathrm{int}_{\mathfrak{a}}}
\newcommand{\apr}{\underline{\mathrm{apr}}_{\mathfrak{a}}}
\newcommand{\APR}{\overline{\mathrm{apr}}_{\mathfrak{a}}}
\newcommand{\bnd}{\mathrm{bnd}_{\mathfrak{a}}}
\newcommand{\aur}{\mathfrak{a}}
\newcommand{\softleq}{\sqsubseteq}
\newcommand{\softcup}{\sqcup}
\newcommand{\softcap}{\sqcap}
\newcommand{\Softcup}{\bigsqcup}

\begin{document}


\title[Soft Aura Topological Spaces]{Soft aura topological spaces and rough approximation operators}

\author{Ahu A\c{c}{\i}kg\"{o}z}
\address{Department of Mathematics, Bal\i kesir University, \c{C}a\u{g}\i \c{s} Campus, 10145 Bal\i kesir, Turkey}
\email{ahuacikgoz@balikesir.edu.tr}

\date{}

\begin{abstract}
We introduce the concept of a soft aura topological space $(X, \stau, \aur_E)$, obtained by equipping a soft topological space $(X, \stau, E)$ with a soft scope function $\aur_E : X \to \stau$ satisfying $x \in \aur_E(x)(e)$ for every $x \in X$ and every parameter $e \in E$. This framework generalizes the recently introduced aura topological spaces to the soft setting. We define the soft aura-closure operator $\cla$ and the soft aura-interior operator $\inte$, and prove that $\cla$ is a soft additive \v{C}ech closure operator whose transfinite iteration yields a soft Kuratowski closure. Five classes of generalized soft open sets---soft $\aur$-semi-open, soft $\aur$-pre-open, soft $\aur$-$\alpha$-open, soft $\aur$-$\beta$-open, and soft $\aur$-$b$-open sets---are introduced, and a complete hierarchy among them is established. Soft $\aur$-continuity and its decompositions are studied. Separation axioms soft $\aur$-$T_i$ ($i = 0, 1, 2, 3$) are introduced; it is shown that soft $\aur$-$T_1$ and soft $\aur$-$T_2$ coincide due to the scope-based formulation, and that the gap between soft $\aur$-$T_0$ and soft $\aur$-$T_1$ is substantially wider in the soft setting due to the parameter-dependent quantification. Soft aura-based lower and upper rough approximation operators are defined, generalizing both the crisp aura rough set model and the classical Pawlak model. An illustrative application to environmental risk assessment---classifying monitoring stations by air pollution and water quality parameters---demonstrates the practical utility of the proposed framework.
\end{abstract}

\keywords{Soft aura topological space, soft scope function, soft \v{C}ech closure, generalized soft open sets, separation axioms, soft rough set}

\subjclass[2020]{54A40, 54A05, 03E72, 54D10, 54C08}

\maketitle

\section{Introduction}\label{sec:intro}

Augmenting topological spaces with secondary structures has long been a central theme in general topology. The theory of ideal topological spaces $(X, \tau, \mathcal{I})$, originating from Kuratowski \cite{Kuratowski} and Vaidyanathaswamy \cite{Vaidyanathaswamy}, was later put on a firm footing by Jankovi\'{c} and Hamlett \cite{JankovicHamlett}. Other enrichment paradigms---grills (Choquet \cite{Choquet}; Roy and Mukherjee \cite{RoyMukherjee}), filters (Cartan \cite{Cartan}), and the more recent primals (Acharjee, \"{O}zko\c{c}, and Issaka \cite{Acharjee}; Al-Omari and Alqahtani \cite{AlOmariPrimal})---have each generated substantial operator-theoretic machinery.

On a separate track, Molodtsov \cite{Molodtsov} proposed the soft set paradigm to handle vagueness without the measurability constraints inherent in fuzzy or probabilistic models. Topological structures on soft sets were then independently formulated by Shabir and Naz \cite{ShabirNaz} and by \c{C}a\u{g}man, Karata\c{s}, and Engino\u{g}lu \cite{CagmanKaratas}, giving rise to a rapidly expanding field. Among the subsequent contributions, Ayg\"{u}no\u{g}lu and Ayg\"{u}n \cite{AygunogluAygun} examined soft compactness and the product construction; Al-shami \cite{AlShamiSoftSep} provided a unifying account of soft separation axioms; and Al-shami and Ko\c{c}inac \cite{AlShamiKocinac} explored nearly soft Menger-type covering properties. Further developments include the soft ideal-enriched $T_{D_I}$-spaces of Al-shami, Ameen, Abu-Gdairi, and Mhemdi \cite{AlShamiTDI}, the soft complex valued metric spaces of Demir \cite{DemirTurkish}, the soft cone metric compactness results of Alt{\i}nta\c{s} and Ta\c{s}k\"{o}pr\"{u} \cite{AltintasTurkish}, and the ideal-based rough approximation framework of G\"{u}ler, Y{\i}ld{\i}r{\i}m, and \"{O}zbak{\i}r \cite{GulerTurkish}.

The bridge between soft sets and Pawlak's rough set theory \cite{Pawlak} has likewise attracted sustained interest. Feng, Li, Davvaz, and Ali \cite{FengLi} merged the two frameworks into a unified soft rough set model, while Alcantud \cite{AlcantudSoft} applied the resulting approximation operators to decision analysis. Zhan and Alcantud \cite{ZhanAlcantud} extended the programme to covering-based soft rough sets. On the generalized-open-set front within soft topology, Akda\u{g} and \"{O}zkan \cite{AkdagOzkan} transferred the classical notions of $\alpha$-openness (Nj\aa stad \cite{Njaastad}) and semi-openness (Levine \cite{Levine}) to the soft context, and Chen \cite{ChenSoft} pursued the associated continuity properties.

In a recent work \cite{AcikgozAura}, the present author proposed the notion of an \emph{aura topological space} $(X, \tau, a)$: an ordinary topological space endowed with a mapping $a : X \to \tau$, called a scope function, satisfying the membership axiom $x \in a(x)$. This single axiom already gives rise to an additive \v{C}ech closure, five classes of generalized open sets with a complete containment hierarchy, scope-dependent separation axioms, and rough approximation operators that dispense with equivalence relations. A subsequent companion paper \cite{AcikgozFuzzy} carried the construction into the fuzzy-topological realm.

The present paper lifts the aura framework into the \emph{soft} setting. Over a soft topological space $(X, \stau, E)$ in the sense of \cite{ShabirNaz}, we define a \emph{soft scope function} $\aur_E : X \to \stau$ subject to the parameterwise membership condition $x \in \aur_E(x)(e)$ for all $x \in X$ and $e \in E$; the resulting quadruple $(X, \stau, \aur_E, E)$ is called a \emph{soft aura topological space}. Passing from the crisp to the soft world is not merely notational: because each soft open set is a family of subsets indexed by $E$, the scope of a point can ``change shape'' from one parameter to another. This parameter-dependence permeates the entire theory---closure and interior act parameterwise, generalized openness conditions may hold at some parameters but fail at others, and separation axioms acquire a new existential-versus-universal flavour absent in the crisp case.

Our main contributions are as follows:
\begin{enumerate}[label=(\roman*)]
\item We introduce soft aura topological spaces and establish the fundamental properties of the soft aura-closure and soft aura-interior operators, including the transfinite iteration to a soft Kuratowski closure (Section~\ref{sec:soft_aura}).
\item We define five classes of generalized soft open sets and establish a complete hierarchy (Section~\ref{sec:gen_open}).
\item We study soft $\aur$-continuity and provide decomposition theorems (Section~\ref{sec:continuity}).
\item We introduce separation axioms soft $\aur$-$T_i$ ($i = 0, 1, 2, 3$) and show that soft $\aur$-$T_1$ and soft $\aur$-$T_2$ coincide; the genuinely soft phenomenon is the wide gap between $T_0$ and $T_1$ created by parameter quantification (Section~\ref{sec:separation}).
\item We define soft aura-based rough approximation operators and demonstrate their application to environmental risk assessment (Sections~\ref{sec:rough} and~\ref{sec:application}).
\end{enumerate}

\section{Preliminaries}\label{sec:prelim}

We recall essential notions from soft set theory and soft topology; for a thorough treatment the reader is referred to Molodtsov \cite{Molodtsov}, Maji, Biswas, and Roy \cite{MajiSoft}, and Shabir and Naz \cite{ShabirNaz}.

Throughout, $X$ is a nonempty initial universe, $E$ a nonempty parameter set, and $\mathrm{SS}(X,E)$ the family of all soft sets over $X$ parametrised by $E$. A soft set $(F,E)$ is the mapping $F:E\to\mathcal{P}(X)$; we write $F(e)$ for the $e$-approximate set. The symbols $\softnull$ and $\softabs$ stand for the null and absolute soft sets, respectively. Soft subset inclusion $(F,E)\softleq (G,E)$, soft union $\softcup$, soft intersection $\softcap$, and soft complement $(F,E)^c$ are all defined parameterwise in the natural way \cite{MajiSoft}.

A \emph{soft topology} $\stau\subseteq\mathrm{SS}(X,E)$ is a collection closed under arbitrary soft unions, finite soft intersections, and containing $\softnull$ and $\softabs$ \cite{ShabirNaz}. Members of $\stau$ are \emph{soft open}; their complements are \emph{soft closed}. The soft closure $\mathrm{cl}(G,E)$ and soft interior $\mathrm{int}(G,E)$ carry their usual meanings.

A \emph{soft point} $x_e$ is the soft set concentrated at $x$ in parameter $e$; we write $x_E$ when $F(e)=\{x\}$ for every $e$. A \emph{soft mapping} $f_{up}=(u,p)$ between soft topological spaces $(X,\stau_1,E)$ and $(Y,\stau_2,K)$, where $u:X\to Y$ and $p:E\to K$, has soft inverse image $f_{up}^{-1}(G,K)=(H,E)$ with $H(e)=u^{-1}(G(p(e)))$. It is \emph{soft continuous} whenever the preimage of every soft open set is soft open \cite{Zorlutuna}.

\section{Soft Aura Topological Spaces}\label{sec:soft_aura}

We now introduce the central notion of this paper.

\begin{definition}\label{def:softscope}
Let $(X, \stau, E)$ be a soft topological space. A mapping $\aur_E : X \to \stau$ is called a \emph{soft scope function} if
\[
x \in \aur_E(x)(e) \quad \text{for every } x \in X \text{ and every } e \in E.
\]
That is, $\aur_E$ assigns to each point $x \in X$ a soft open set $\aur_E(x) = (F_x, E) \in \stau$ such that $x \in F_x(e)$ for every parameter $e$.
\end{definition}

\begin{definition}\label{def:softaura}
A \emph{soft aura topological space} is a quadruple $(X, \stau, \aur_E, E)$ where $(X, \stau, E)$ is a soft topological space and $\aur_E : X \to \stau$ is a soft scope function.
\end{definition}

\begin{example}\label{ex:basic}
Let $X = \{x_1, x_2, x_3\}$ and $E = \{e_1, e_2\}$. Define the soft topology $\stau = \{\softnull, \softabs, (F_1, E), (F_2, E), (F_3, E)\}$ where
\begin{align*}
&F_1(e_1) = \{x_1\},\ F_1(e_2) = \{x_1, x_2\}; \quad F_2(e_1) = \{x_1, x_2\},\ F_2(e_2) = X; \\
&F_3(e_1) = \{x_1, x_2\},\ F_3(e_2) = \{x_1, x_2\}.
\end{align*}
Define $\aur_E(x_1) = (F_1, E)$, $\aur_E(x_2) = (F_3, E)$, $\aur_E(x_3) = \softabs$. Since $x_1 \in F_1(e_i)$, $x_2 \in F_3(e_i)$, and $x_3 \in X$ for all $i$, $\aur_E$ is a soft scope function, and $(X, \stau, \aur_E, E)$ is a soft aura topological space.
\end{example}

\begin{remark}\label{rem:existence}
Every soft topological space $(X, \stau, E)$ admits at least one soft scope function, namely the \emph{trivial} soft scope function $\aur_E(x) = \softabs$ for all $x \in X$. At the other extreme, if $\stau$ is rich enough, one may choose a ``fine'' soft scope function by assigning to each $x$ a ``small'' soft open neighborhood.
\end{remark}

\begin{remark}\label{rem:crisprelation}
When $E = \{e_0\}$ is a singleton, a soft topology reduces to a classical topology, and a soft scope function reduces to a scope function $a : X \to \tau$ with $x \in a(x)$. Thus, the crisp aura topological space $(X, \tau, a)$ of \cite{AcikgozAura} is a special case.
\end{remark}

We now define the fundamental operators.

\begin{definition}\label{def:softauraclosure}
Let $(X, \stau, \aur_E, E)$ be a soft aura topological space. For $(G, E) \in \mathrm{SS}(X, E)$, the \emph{soft aura-closure} $\cla(G, E) = (H, E)$ is defined parameterwise by
\[
H(e) = \{x \in X : \aur_E(x)(e) \cap G(e) \neq \emptyset\}, \quad e \in E.
\]
\end{definition}

\begin{definition}\label{def:softaurainterior}
The \emph{soft aura-interior} $\inte(G, E) = (H, E)$ is defined parameterwise by
\[
H(e) = \{x \in X : \aur_E(x)(e) \subseteq G(e)\}, \quad e \in E.
\]
\end{definition}

\begin{theorem}\label{thm:closureprops}
The operator $\cla$ satisfies the following for all $(G, E), (H, E) \in \mathrm{SS}(X, E)$:
\begin{enumerate}[label=(\roman*)]
\item \textbf{Grounding:} $\cla(\softnull) = \softnull$.
\item \textbf{Enlargement:} $(G, E) \softleq \cla(G, E)$.
\item \textbf{Monotonicity:} If $(G, E) \softleq (H, E)$, then $\cla(G, E) \softleq \cla(H, E)$.
\item \textbf{Soft additivity:} $\cla((G, E) \softcup (H, E)) = \cla(G, E) \softcup \cla(H, E)$.
\end{enumerate}
Hence $\cla$ is a \emph{soft additive \v{C}ech closure operator}.
\end{theorem}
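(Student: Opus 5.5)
All four properties will be checked parameterwise. Soft inclusion, soft union and equality of soft sets over $E$ are all defined componentwise, and $\cla$ is itself defined one parameter at a time. So it suffices to fix an arbitrary $e \in E$ and verify the corresponding set-theoretic statement about the $e$-approximate sets. Throughout I write $\cla(G,E)(e) = \{x \in X : \aur_E(x)(e) \cap G(e) \neq \emptyset\}$.

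For (i), $\softnull(e) = \emptyset$, so $\aur_E(x)(e) \cap \emptyset = \emptyset$ for every $x$. Hence $\cla(\softnull)(e) = \emptyset$ for all $e$, which is $\softnull$.

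For (ii), this is the only place the soft scope axiom of Definition~\ref{def:softscope} enters. If $x \in G(e)$, then since $x \in \aur_E(x)(e)$ we get $x \in \aur_E(x)(e) \cap G(e)$. This intersection is nonempty, so $x \in \cla(G,E)(e)$. It is worth stressing that the axiom is needed at \emph{every} parameter $e$, which is exactly what Definition~\ref{def:softscope} supplies.

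For (iii), suppose $G(e) \subseteq H(e)$ for all $e$. Then $\aur_E(x)(e) \cap G(e) \neq \emptyset$ implies $\aur_E(x)(e) \cap H(e) \neq \emptyset$, so $\cla(G,E)(e) \subseteq \cla(H,E)(e)$.

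For (iv), the inclusion $\sqsupseteq$ follows from (iii), applied to $(G,E) \softleq (G,E)\softcup(H,E)$ and to its analogue for $(H,E)$. For the reverse inclusion I will use distributivity:
\[
\aur_E(x)(e) \cap (G(e) \cup H(e)) = (\aur_E(x)(e) \cap G(e)) \cup (\aur_E(x)(e) \cap H(e)).
\]
If the left side is nonempty, then one of the two pieces on the right is nonempty, so $x \in \cla(G,E)(e) \cup \cla(H,E)(e)$.

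No step is a genuine obstacle. The only point requiring care is recording that soft union is parameterwise, so that $((G,E)\softcup(H,E))(e) = G(e) \cup H(e)$ and distributivity applies at each $e$. With (i)--(iv) established, $\cla$ meets the definition of a soft additive \v{C}ech closure operator.
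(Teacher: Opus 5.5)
Your proof is correct and follows the paper's argument essentially line by line: you check each property parameterwise, and the scope axiom $x \in \aur_E(x)(e)$ is used only for enlargement. The only cosmetic difference is in (iv), where you get $\sqsupseteq$ from monotonicity and $\softleq$ from distributivity, whereas the paper writes a single chain of equivalences with the same content.
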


\begin{proof}
(i) For every $e \in E$ and $x \in X$, $\aur_E(x)(e) \cap \emptyset = \emptyset$, so $\cla(\softnull)(e) = \emptyset$.

(ii) Let $x \in G(e)$. Since $x \in \aur_E(x)(e)$, we have $\aur_E(x)(e) \cap G(e) \neq \emptyset$, so $x \in \cla(G, E)(e)$.

(iii) If $\aur_E(x)(e) \cap G(e) \neq \emptyset$ and $G(e) \subseteq H(e)$, then $\aur_E(x)(e) \cap H(e) \neq \emptyset$.

(iv) For every $e \in E$ and $x \in X$,
\begin{align*}
x \in \cla((G, E) \softcup (H, E))(e) &\iff \aur_E(x)(e) \cap (G(e) \cup H(e)) \neq \emptyset \\
&\iff \aur_E(x)(e) \cap G(e) \neq \emptyset \text{ or } \aur_E(x)(e) \cap H(e) \neq \emptyset \\
&\iff x \in \cla(G, E)(e) \cup \cla(H, E)(e). \qedhere
\end{align*}
\end{proof}

\begin{remark}\label{rem:notidempotent}
In general, $\cla$ is \emph{not} idempotent: $\cla(\cla(G, E))$ may differ from $\cla(G, E)$. This is the same phenomenon observed in the crisp aura case \cite{AcikgozAura}.
\end{remark}

\begin{theorem}\label{thm:interiorprops}
The operator $\inte$ satisfies the following for all $(G, E), (H, E) \in \mathrm{SS}(X, E)$:
\begin{enumerate}[label=(\roman*)]
\item $\inte(\softabs) = \softabs$.
\item $\inte(G, E) \softleq (G, E)$.
\item If $(G, E) \softleq (H, E)$, then $\inte(G, E) \softleq \inte(H, E)$.
\item $\inte((G, E) \softcap (H, E)) = \inte(G, E) \softcap \inte(H, E)$.
\end{enumerate}
\end{theorem}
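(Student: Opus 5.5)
The plan is to argue parameterwise, exactly as in the proof of Theorem~\ref{thm:closureprops}. Both $\inte$ and the soft operations $\softleq$, $\softcap$ are defined one parameter at a time, so it suffices to fix $e \in E$ and $x \in X$ and verify each claim at the level of the subsets $\aur_E(x)(e)$, $G(e)$ and $H(e)$.

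As an alternative, I could first establish the duality $\inte(G,E) = \bigl(\cla((G,E)^c)\bigr)^c$. Parameterwise this says that $\aur_E(x)(e) \subseteq G(e)$ holds if and only if $\aur_E(x)(e) \cap (X \setminus G(e)) = \emptyset$, which is immediate. Items (i)--(iv) then follow from Theorem~\ref{thm:closureprops} (i)--(iv) by taking complements and applying De Morgan's laws for soft sets. I will, however, present the direct verification, since it is equally short.

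\textbf{Item (i).} For every $x$ and $e$ we have $\aur_E(x)(e) \subseteq X = \softabs(e)$. Hence $\inte(\softabs)(e) = X$ for all $e$, that is, $\inte(\softabs) = \softabs$.

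\textbf{Item (ii).} This is the only place where the scope axiom of Definition~\ref{def:softscope} is used. If $x \in \inte(G,E)(e)$, then $\aur_E(x)(e) \subseteq G(e)$. Since $x \in \aur_E(x)(e)$, it follows that $x \in G(e)$.

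\textbf{Item (iii).} If $\aur_E(x)(e) \subseteq G(e)$ and $G(e) \subseteq H(e)$, then $\aur_E(x)(e) \subseteq H(e)$ by transitivity of inclusion.

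\textbf{Item (iv).} I will use the chain of equivalences
\[
\aur_E(x)(e) \subseteq G(e) \cap H(e) \iff \aur_E(x)(e) \subseteq G(e) \text{ and } \aur_E(x)(e) \subseteq H(e),
\]
which gives $\inte((G,E)\softcap(H,E))(e) = \inte(G,E)(e) \cap \inte(H,E)(e)$ for every $e$.

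No step presents a real obstacle; the proof is routine. The only point needing care is (ii), where the membership condition $x \in \aur_E(x)(e)$ must be applied at every parameter $e$ rather than at just one. This is precisely why Definition~\ref{def:softscope} quantifies over all $e \in E$.
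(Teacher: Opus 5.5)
Your proposal is correct and matches the paper's proof essentially line for line. You fix $e$ and $x$ and verify each item directly: $\aur_E(x)(e)\subseteq X$ for (i), the scope axiom $x\in\aur_E(x)(e)$ for (ii), transitivity of inclusion for (iii), and the equivalence $\aur_E(x)(e)\subseteq G(e)\cap H(e)$ iff both inclusions hold for (iv).
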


\begin{proof}
(i) For every $x \in X$ and $e \in E$, $\aur_E(x)(e) \subseteq X$, so $x \in \inte(\softabs)(e)$.
(ii) If $\aur_E(x)(e) \subseteq G(e)$ then $x \in \aur_E(x)(e) \subseteq G(e)$.
(iii) If $\aur_E(x)(e) \subseteq G(e) \subseteq H(e)$, then $\aur_E(x)(e) \subseteq H(e)$.
(iv) $\aur_E(x)(e) \subseteq G(e) \cap H(e)$ iff $\aur_E(x)(e) \subseteq G(e)$ and $\aur_E(x)(e) \subseteq H(e)$.
\end{proof}

\begin{theorem}\label{thm:duality}
For every $(G, E) \in \mathrm{SS}(X, E)$,
\[
\cla((G, E)^c) = (\inte(G, E))^c \quad \text{and} \quad \inte((G, E)^c) = (\cla(G, E))^c.
\]
\end{theorem}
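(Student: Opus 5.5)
Both identities are pointwise statements about soft sets, so the plan is to fix an arbitrary parameter $e \in E$ and a point $x \in X$ and compare membership on both sides. This uses only Definitions~\ref{def:softauraclosure} and~\ref{def:softaurainterior} together with the parameterwise definition of the soft complement, namely $(G,E)^c(e) = X \setminus G(e)$. No property of $\aur_E$ beyond its being a map into $\stau$ is needed; in particular the membership axiom $x \in \aur_E(x)(e)$ plays no role here.

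For the first identity, $x \in \cla((G,E)^c)(e)$ holds iff $\aur_E(x)(e) \cap (X \setminus G(e)) \neq \emptyset$. This is equivalent to $\aur_E(x)(e) \not\subseteq G(e)$, that is, to $x \notin \inte(G,E)(e)$. That in turn says $x \in (\inte(G,E))^c(e)$. Since $e$ and $x$ were arbitrary, the two soft sets agree at every parameter and are therefore equal.

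For the second identity, I would apply the first one to $(G,E)^c$ in place of $(G,E)$ and use $((G,E)^c)^c = (G,E)$. This gives $\cla(G,E) = (\inte((G,E)^c))^c$, and taking complements yields $\inte((G,E)^c) = (\cla(G,E))^c$. Alternatively, the same chain of equivalences can be run directly: $\aur_E(x)(e) \subseteq X \setminus G(e)$ holds iff $\aur_E(x)(e) \cap G(e) = \emptyset$.

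There is no real obstacle; the only point needing care is to state explicitly that soft complement and soft equality are parameterwise, so the elementary set identity $A \cap (X\setminus B) \neq \emptyset \iff A \not\subseteq B$ transfers directly to each $e$.
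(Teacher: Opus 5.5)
Your proof is correct and is essentially the paper's argument: a parameterwise chain of equivalences based on $A \cap (X\setminus B) \neq \emptyset \iff A \not\subseteq B$, with the second identity obtained by substituting $(G,E)^c$ for $(G,E)$. You also spell out the double-complement step that the paper leaves implicit.
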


\begin{proof}
For every $e \in E$ and $x \in X$:
\begin{align*}
x \in \cla((G, E)^c)(e) &\iff \aur_E(x)(e) \cap (X \setminus G(e)) \neq \emptyset \iff \aur_E(x)(e) \not\subseteq G(e) \\
&\iff x \notin \inte(G, E)(e) \iff x \in (X \setminus \inte(G, E)(e)).
\end{align*}
The second identity follows by replacing $(G, E)$ with $(G, E)^c$.
\end{proof}

\begin{definition}\label{def:softauraopen}
A soft set $(G, E)$ is called \emph{soft $\aur$-open} if $\inte(G, E) = (G, E)$. A soft set is \emph{soft $\aur$-closed} if its complement is soft $\aur$-open.
\end{definition}

\begin{theorem}\label{thm:auratopology}
The collection $\stau_\aur = \{(G, E) \in \mathrm{SS}(X, E) : \inte(G, E) = (G, E)\}$ forms a soft topology on $X$ with parameter set $E$.
\end{theorem}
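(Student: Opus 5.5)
We check the three axioms of a soft topology directly for $\stau_\aur$. By Theorem~\ref{thm:interiorprops}(ii), $\inte(G,E)\softleq(G,E)$ always holds. So $(G,E)\in\stau_\aur$ is equivalent to the reverse inclusion $(G,E)\softleq\inte(G,E)$. Unwinding this parameterwise gives
\[
x\in G(e)\implies \aur_E(x)(e)\subseteq G(e)\quad\text{for all } e\in E,\ x\in X.
\]
We will use this pointwise characterization throughout.

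First, $\softabs\in\stau_\aur$ by Theorem~\ref{thm:interiorprops}(i). Also $\softnull\in\stau_\aur$, because $\inte(\softnull)\softleq\softnull$ by part (ii), which forces $\inte(\softnull)=\softnull$. (The pointwise condition also holds vacuously for $\softnull$.)

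For finite intersections, take $(G,E),(H,E)\in\stau_\aur$. Theorem~\ref{thm:interiorprops}(iv) gives
\[
\inte((G,E)\softcap(H,E))=\inte(G,E)\softcap\inte(H,E)=(G,E)\softcap(H,E).
\]
Induction then handles any finite family.

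For arbitrary unions, let $\{(G_i,E)\}_{i\in I}\subseteq\stau_\aur$ and write $(G,E)=\Softcup_i(G_i,E)$, so $G(e)=\bigcup_i G_i(e)$. Suppose $x\in G(e)$. Then $x\in G_i(e)$ for some $i$, and the pointwise characterization gives $\aur_E(x)(e)\subseteq G_i(e)\subseteq G(e)$. Hence $x\in\inte(G,E)(e)$, so $(G,E)\softleq\inte(G,E)$. Combined with part (ii), this gives equality. Equivalently, one can use monotonicity (part (iii)): $(G_i,E)=\inte(G_i,E)\softleq\inte(G,E)$ for each $i$, and taking the union over $i$ gives the same inclusion.

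None of these steps is a real obstacle. The one point that needs care is the union axiom: additivity of $\inte$ over unions is not available (only over intersections), so this step must use the monotonicity/pointwise argument rather than an algebraic identity. The empty index family is already covered by the $\softnull$ case.
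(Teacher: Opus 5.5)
Your proposal is correct and follows essentially the same route as the paper: $\softnull$ and $\softabs$ come from Theorem~\ref{thm:interiorprops}, finite intersections from part (iv), and arbitrary unions from the pointwise argument $\aur_E(x)(e)\subseteq G_{i}(e)\subseteq G(e)$ combined with part (ii). Your derivation of $\inte(\softnull)=\softnull$ from part (ii) makes explicit a step the paper leaves implicit.
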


\begin{proof}
$\softnull$ and $\softabs$ are soft $\aur$-open by Theorem~\ref{thm:interiorprops}. Finite intersections are closed under $\stau_\aur$ by Theorem~\ref{thm:interiorprops}(iv). For arbitrary unions: let $(G_i, E) \in \stau_\aur$ for all $i \in I$ and $(G, E) = \Softcup_{i \in I} (G_i, E)$. If $x \in G(e)$, there exists $i_0$ with $x \in G_{i_0}(e)$. Since $(G_{i_0}, E)$ is soft $\aur$-open, $\aur_E(x)(e) \subseteq G_{i_0}(e) \subseteq G(e)$, so $x \in \inte(G, E)(e)$. Combined with $\inte(G, E) \softleq (G, E)$, equality follows.
\end{proof}

\begin{remark}\label{rem:topology_relation}
In general, $\stau_\aur$ and $\stau$ are distinct soft topologies. When $E$ is a singleton, $\stau_\aur \subseteq \stau$ always holds, recovering the crisp result from \cite{AcikgozAura}. For general parameter sets, the relationship depends on the interplay between the scope function and the parameterization.
\end{remark}

\begin{definition}\label{def:transfinite}
Define transfinite iterates of $\cla$ by:
\begin{enumerate}[label=(\alph*)]
\item $\cla^0(G, E) = (G, E)$;
\item $\cla^{\alpha+1}(G, E) = \cla(\cla^\alpha(G, E))$ for successor ordinals;
\item $\cla^\lambda(G, E) = \Softcup_{\alpha < \lambda} \cla^\alpha(G, E)$ for limit ordinals $\lambda$.
\end{enumerate}
\end{definition}

\begin{theorem}\label{thm:kuratowski}
For every $(G, E) \in \mathrm{SS}(X, E)$, the transfinite sequence $(\cla^\alpha(G, E))_\alpha$ stabilizes at some ordinal $\gamma \leq |X|$. The operator $\cla^\infty(G, E) := \cla^\gamma(G, E)$ is a soft Kuratowski closure operator.
\end{theorem}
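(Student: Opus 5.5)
The plan is to reduce everything to one structural fact: $\cla$ is not merely finitely additive but \emph{completely} additive, and it acts parameterwise. Both properties are read off directly from Definition~\ref{def:softauraclosure}. For a family $(G_i,E)_{i\in I}$, a point $x$ lies in $\cla(\Softcup_i (G_i,E))(e)$ iff $\aur_E(x)(e)\cap\bigcup_i G_i(e)\neq\emptyset$, iff $\aur_E(x)(e)\cap G_i(e)\neq\emptyset$ for some $i$. Hence
\[
\cla\Bigl(\Softcup_{i\in I}(G_i,E)\Bigr)=\Softcup_{i\in I}\cla(G_i,E).
\]
This is the same computation as in Theorem~\ref{thm:closureprops}(iv), with arbitrary index sets. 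Moreover, the value $\cla(G,E)(e)$ depends only on $G(e)$.

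\textbf{Stabilization.} First, by enlargement (Theorem~\ref{thm:closureprops}(ii)) and transfinite induction, the sequence $\cla^\alpha(G,E)$ is $\softleq$-increasing in $\alpha$. At limit stages this is immediate from the union in Definition~\ref{def:transfinite}(c).

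Next, apply complete additivity at $\omega$:
\[
\cla\bigl(\cla^\omega(G,E)\bigr)=\Softcup_{n<\omega}\cla^{n+1}(G,E)=\cla^\omega(G,E).
\]
So the sequence is constant from $\omega$ on, and $\gamma\le\omega$ always.

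If $X$ is finite, argue parameterwise. For fixed $e$, the chain $\cla^n(G,E)(e)$ is increasing in $\mathcal P(X)$. Once two consecutive terms agree at $e$, all later terms agree at $e$, because $\cla$ acts parameterwise. A strictly increasing chain of subsets of $X$ has at most $|X|$ strict steps, so stabilization occurs at every $e$ by step $|X|$, uniformly in $e$. Therefore $\gamma\le|X|$ in both cases, reading $|X|$ as the initial ordinal when $X$ is infinite. This uniformity over the possibly infinite parameter set $E$ is the one point needing care. The $\omega$-argument handles it without any cardinality bound on $E$, which is why I would present that argument first.

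\textbf{Kuratowski axioms for $\cla^\infty$.} Show by transfinite induction that each $\cla^\alpha$ is completely additive. At successor stages this holds because a composition of completely additive maps is completely additive. At limit stages it holds because unions commute with unions. With this in hand, the axioms follow:
\begin{itemize}
\item Grounding: $\cla^\infty(\softnull)=\softnull$, by induction from Theorem~\ref{thm:closureprops}(i).
\item Enlargement: $(G,E)=\cla^0(G,E)\softleq\cla^\infty(G,E)$.
\item Additivity: since $\cla^\infty=\cla^\omega$ for every soft set simultaneously, additivity follows from additivity of $\cla^\omega$.
\item Idempotence: $K:=\cla^\infty(G,E)$ is a fixed point of $\cla$, so $\cla^\alpha(K)=K$ for all $\alpha$ by induction, and therefore $\cla^\infty(K)=K$.
\end{itemize}
Monotonicity follows from additivity.

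I expect no real obstacle. The only delicate point is making the stabilization ordinal independent of both $(G,E)$ and $e$, so that $\cla^\infty$ is a single well-defined operator. Fixing $\gamma=\omega$, or $\gamma=|X|$ when $X$ is finite (where the chain has already stabilized), removes this issue.
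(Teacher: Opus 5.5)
Your proof is correct, but it takes a different and sharper route than the paper's. The paper's proof only says four things:
\begin{itemize}
\item the sequence increases by enlargement;
\item for each $e\in E$ the chain in $\mathcal{P}(X)$ ``must stabilize by cardinality'';
\item grounding, enlargement, monotonicity and finite additivity are inherited by transfinite induction;
\item idempotency follows from $\cla^{\gamma+1}(G,E)=\cla^{\gamma}(G,E)$.
\end{itemize}
That cardinality argument works for any enlarging, monotone, parameterwise operator. By itself, however, it only bounds each parameterwise stabilization ordinal below $|X|^{+}$, not by $|X|$ as an ordinal when $X$ is infinite. It also says nothing about uniformity over a possibly large $E$, or about choosing one $\gamma$ for all soft sets at once, which additivity of $\cla^\infty$ needs.

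Your key lemma is that $\cla$ is \emph{completely} additive, since membership in $\cla(G,E)(e)$ is witnessed by a single point of $\aur_E(x)(e)\cap G(e)$. This settles all of the above at once. It gives $\cla(\cla^{\omega}(G,E))=\cla^{\omega}(G,E)$ for every soft set, regardless of $|E|$. Hence you get the literal ordinal bound $\gamma\le\omega\le|X|$ for infinite $X$ and a canonical uniform definition $\cla^\infty=\cla^{\omega}$. Your counting argument then handles finite $X$.

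The price is generality. Your stabilization step uses a property specific to the aura closure and would fail for a merely finitely additive \v{C}ech closure, whose iteration can run past $\omega$. The paper's sketch is formally operator-agnostic but correspondingly weaker, and for this operator your version is the more complete proof of the stated bound.
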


\begin{proof}
By enlargement, the sequence is increasing. For each $e \in E$, the chain in $\mathcal{P}(X)$ must stabilize by cardinality. Grounding, enlargement, monotonicity, and additivity are inherited by transfinite induction. Idempotency holds since $\cla(\cla^\gamma(G, E)) = \cla^{\gamma+1}(G, E) = \cla^\gamma(G, E)$.
\end{proof}

\begin{corollary}\label{cor:topologychain}
The Kuratowski closure $\cla^\infty$ generates a soft topology $\stau_\aur^\infty$ with $\stau_\aur^\infty \subseteq \stau_\aur$.
\end{corollary}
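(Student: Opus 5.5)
The corollary has two parts. First, $\cla^\infty$ generates a soft topology. Second, that topology is contained in $\stau_\aur$.

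For the first part, I will invoke the standard correspondence between Kuratowski closure operators and topologies, applied parameterwise. By Theorem~\ref{thm:kuratowski}, $\cla^\infty$ is grounded, extensive, additive and idempotent. I will define
\[
\stau_\aur^\infty=\{(G,E): \cla^\infty((G,E)^c)=(G,E)^c\},
\]
the soft sets whose complements are $\cla^\infty$-closed. The soft topology axioms then follow routinely:
\begin{enumerate}[label=(\alph*)]
\item $\softnull$ and $\softabs$ belong to $\stau_\aur^\infty$ by grounding and enlargement.
\item Finite intersections correspond to finite unions of closed sets, which are handled by additivity.
\item Arbitrary unions correspond to arbitrary intersections of closed sets. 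For a family of closed sets $(C_i,E)$, monotonicity gives $\cla^\infty(\bigsqcap_i C_i)\softleq \cla^\infty(C_i)=C_i$ for each $i$, and enlargement gives the reverse inclusion.
\end{enumerate}
Everything acts parameterwise, so no soft-specific difficulty arises.

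For the inclusion $\stau_\aur^\infty\subseteq\stau_\aur$, let $(G,E)\in\stau_\aur^\infty$ and set $(C,E)=(G,E)^c$, so that $\cla^\infty(C,E)=(C,E)$. The transfinite sequence is increasing and squeezed between $(C,E)=\cla^0(C,E)$ and $\cla^\gamma(C,E)=(C,E)$. Hence $\cla(C,E)=\cla^1(C,E)\softleq \cla^\gamma(C,E)=(C,E)$, and by enlargement (Theorem~\ref{thm:closureprops}(ii)) we get $\cla(C,E)=(C,E)$.

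Applying the duality of Theorem~\ref{thm:duality}, $\inte(G,E)=(\cla((G,E)^c))^c=(C,E)^c=(G,E)$. Therefore $(G,E)$ is soft $\aur$-open, that is, $(G,E)\in\stau_\aur$.

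The only delicate point is that $\cla^1\softleq\cla^\gamma$. This uses monotonicity of the transfinite sequence in $\alpha$, which the proof of Theorem~\ref{thm:kuratowski} establishes from enlargement. It also requires $\gamma\ge 1$; if $\gamma=0$, then $\cla(C,E)=(C,E)$ holds directly by stabilization.

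In fact the argument shows the reverse inclusion as well: any $(G,E)$ with $\inte(G,E)=(G,E)$ has a $\cla$-fixed complement, hence is fixed at every stage. So the two topologies coincide, although only the stated inclusion is needed.
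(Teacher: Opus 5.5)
Your proof is correct. It is the routine verification the paper leaves implicit: the corollary is stated without proof after Theorem~\ref{thm:kuratowski}, and you show that the fixed points of $\cla^\infty$ are the closed sets of a soft topology and that $C\softleq\cla(C)\softleq\cla^\infty(C)$ together with Theorem~\ref{thm:duality} gives the inclusion. Your closing remark is also correct: a $\cla$-fixed soft set stays fixed at every stage of the transfinite iteration, so in fact $\stau_\aur^\infty=\stau_\aur$, and the paper's inclusion (the ``chain'' mentioned in its conclusion) is actually an equality.
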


\section{Generalized Soft Open Sets}\label{sec:gen_open}

\begin{definition}\label{def:genopen}
Let $(X, \stau, \aur_E, E)$ be a soft aura topological space and $(G, E) \in \mathrm{SS}(X, E)$.
\begin{enumerate}[label=(\alph*)]
\item $(G, E)$ is \emph{soft $\aur$-semi-open} if $(G, E) \softleq \cla(\inte(G, E))$.
\item $(G, E)$ is \emph{soft $\aur$-pre-open} if $(G, E) \softleq \inte(\cla(G, E))$.
\item $(G, E)$ is \emph{soft $\aur$-$\alpha$-open} if $(G, E) \softleq \inte(\cla(\inte(G, E)))$.
\item $(G, E)$ is \emph{soft $\aur$-$\beta$-open} if $(G, E) \softleq \cla(\inte(\cla(G, E)))$.
\item $(G, E)$ is \emph{soft $\aur$-$b$-open} if $(G, E) \softleq \cla(\inte(G, E)) \softcup \inte(\cla(G, E))$.
\end{enumerate}
\end{definition}

\begin{theorem}\label{thm:hierarchy}
The following implications hold:
\[
\text{soft } \aur\text{-open} \Rightarrow \text{soft } \aur\text{-}\alpha\text{-open} \Rightarrow
\begin{cases}
\text{soft } \aur\text{-semi-open} \\
\text{soft } \aur\text{-pre-open}
\end{cases}
\Rightarrow \text{soft } \aur\text{-}b\text{-open} \Rightarrow \text{soft } \aur\text{-}\beta\text{-open}.
\]
\end{theorem}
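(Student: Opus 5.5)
The plan is to derive each arrow from the operator properties already established: enlargement and monotonicity of $\cla$ (Theorem~\ref{thm:closureprops}), deflation and monotonicity of $\inte$ (Theorem~\ref{thm:interiorprops}), and the definition of soft $\aur$-openness. All inclusions are of soft sets over the same parameter set $E$, so every chain of $\softleq$-relations may be composed directly.

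\emph{Open $\Rightarrow$ $\alpha$-open.} Suppose $\inte(G,E)=(G,E)$. By enlargement, $(G,E)\softleq\cla(G,E)=\cla(\inte(G,E))$. Applying the monotone operator $\inte$ gives
\[
(G,E)=\inte(G,E)\softleq\inte(\cla(\inte(G,E))).
\]

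\emph{$\alpha$-open $\Rightarrow$ semi-open.} From $(G,E)\softleq\inte(\cla(\inte(G,E)))$ and $\inte(K,E)\softleq(K,E)$ with $(K,E)=\cla(\inte(G,E))$, we get $(G,E)\softleq\cla(\inte(G,E))$.

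\emph{$\alpha$-open $\Rightarrow$ pre-open.} Since $\inte(G,E)\softleq(G,E)$, monotonicity of $\cla$ gives $\cla(\inte(G,E))\softleq\cla(G,E)$. Monotonicity of $\inte$ then gives $\inte(\cla(\inte(G,E)))\softleq\inte(\cla(G,E))$, and we chain this with the $\alpha$-open hypothesis.

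\emph{Semi-open or pre-open $\Rightarrow$ $b$-open.} This is immediate, because each of $\cla(\inte(G,E))$ and $\inte(\cla(G,E))$ is $\softleq$ their soft union.

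\emph{$b$-open $\Rightarrow$ $\beta$-open.} This is the only step with any content, and I would show that both summands of the $b$-condition lie below $\cla(\inte(\cla(G,E)))$.

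For the pre-open summand, apply enlargement of $\cla$ to $(K,E)=\inte(\cla(G,E))$. This gives $\inte(\cla(G,E))\softleq\cla(\inte(\cla(G,E)))$.

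For the semi-open summand, start from $(G,E)\softleq\cla(G,E)$. Applying monotonicity of $\inte$ gives $\inte(G,E)\softleq\inte(\cla(G,E))$. Applying monotonicity of $\cla$ then gives $\cla(\inte(G,E))\softleq\cla(\inte(\cla(G,E)))$.

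Hence the union of the two summands lies below $\cla(\inte(\cla(G,E)))$, and so $(G,E)$ is soft $\aur$-$\beta$-open.

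I do not expect a genuine obstacle. The one point needing care is that $\cla$ is not idempotent (Remark~\ref{rem:notidempotent}), so the argument must never collapse $\cla\cla$ or $\inte\inte$. The chains above use only enlargement, deflation and monotonicity, all of which hold parameterwise, so the argument is unaffected. Optionally, I would close by noting that the converses fail, using a small example in the style of Example~\ref{ex:basic}, but that is not required for the implications themselves.
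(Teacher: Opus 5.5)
Your proposal is correct and follows the paper's proof essentially step for step. Each arrow comes from enlargement and monotonicity of $\cla$ together with deflation and monotonicity of $\inte$, and for $b$-open $\Rightarrow$ $\beta$-open you bound both summands by $\cla(\inte(\cla(G,E)))$ exactly as the paper does.
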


\begin{proof}
\emph{Soft $\aur$-open $\Rightarrow$ soft $\aur$-$\alpha$-open:} If $\inte(G, E) = (G, E)$, then $\cla(\inte(G, E)) = \cla(G, E) \sqsupseteq (G, E)$ by enlargement, and $\inte(\cla(\inte(G, E))) = \inte(\cla(G, E)) \sqsupseteq \inte(G, E) = (G, E)$ by monotonicity.

\emph{Soft $\aur$-$\alpha$-open $\Rightarrow$ soft $\aur$-semi-open:} Since $\inte(A) \softleq A$ for all $A$, $\inte(\cla(\inte(G, E))) \softleq \cla(\inte(G, E))$.

\emph{Soft $\aur$-$\alpha$-open $\Rightarrow$ soft $\aur$-pre-open:} Since $\inte(G, E) \softleq (G, E)$, monotonicity gives $\cla(\inte(G, E)) \softleq \cla(G, E)$, hence $\inte(\cla(\inte(G, E))) \softleq \inte(\cla(G, E))$.

\emph{Soft $\aur$-semi/pre-open $\Rightarrow$ soft $\aur$-$b$-open:} Immediate from the definition of $b$-open.

\emph{Soft $\aur$-$b$-open $\Rightarrow$ soft $\aur$-$\beta$-open:} Since $\inte(G, E) \softleq \inte(\cla(G, E))$, we get $\cla(\inte(G, E)) \softleq \cla(\inte(\cla(G, E)))$. Also, $\inte(\cla(G, E)) \softleq \cla(\inte(\cla(G, E)))$ by enlargement. Hence $(G, E) \softleq \cla(\inte(\cla(G, E)))$.
\end{proof}

\begin{remark}\label{rem:strictness}
All implications in Theorem~\ref{thm:hierarchy} are strict. When $E$ is a singleton, the separating counterexamples from the crisp aura topological space \cite{AcikgozAura} apply directly. When $|E| \geq 2$, the parameter-dependent nature of the soft scope function creates additional phenomena: a soft set may satisfy a generalized openness condition at one parameter but fail at another, producing genuinely soft separations not reducible to the crisp case.
\end{remark}

\begin{theorem}\label{thm:unionintersection}
\begin{enumerate}[label=(\roman*)]
\item An arbitrary soft union of soft $\aur$-semi-open sets is soft $\aur$-semi-open.
\item An arbitrary soft union of soft $\aur$-pre-open sets is soft $\aur$-pre-open.
\item An arbitrary soft union of soft $\aur$-$\beta$-open sets is soft $\aur$-$\beta$-open.
\end{enumerate}
\end{theorem}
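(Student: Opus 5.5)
The argument uses only monotonicity of $\cla$ and $\inte$ (Theorem~\ref{thm:closureprops}(iii) and Theorem~\ref{thm:interiorprops}(iii)), applied to the inclusions $(G_i,E)\softleq (G,E)$. Let $\{(G_i,E)\}_{i\in I}$ be a family and put $(G,E)=\Softcup_{i\in I}(G_i,E)$. Since $(G_i,E)\softleq(G,E)$ for every $i$, monotonicity gives, for each $i$,
\[
\cla(\inte(G_i,E))\softleq\cla(\inte(G,E)),\qquad \inte(\cla(G_i,E))\softleq\inte(\cla(G,E)),
\]
\[
\cla(\inte(\cla(G_i,E)))\softleq\cla(\inte(\cla(G,E))).
\]
Each of these follows from composing the monotone operators.

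For (i), suppose each $(G_i,E)$ is soft $\aur$-semi-open. Then $(G_i,E)\softleq\cla(\inte(G_i,E))\softleq\cla(\inte(G,E))$ for every $i$. Taking the soft union over $i$, which is the least upper bound parameterwise, gives $(G,E)\softleq\cla(\inte(G,E))$. Cases (ii) and (iii) are identical, using the second and third displayed inclusions respectively. Equivalently, one can check pointwise: if $x\in G(e)$, pick $i_0$ with $x\in G_{i_0}(e)$, apply the hypothesis on $(G_{i_0},E)$ at parameter $e$, and transfer via monotonicity at that same parameter.

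There is no real obstacle here. The only point to note is that the parameterwise definitions of $\softcup$ and $\softleq$ make the union the least upper bound. Also, additivity of $\cla$ is not needed, since it is finite only and would not cover arbitrary unions; monotonicity alone suffices. Finally, the same argument would not work for $\aur$-$\alpha$-open sets in this form without extra care, which is presumably why they are omitted from the statement.
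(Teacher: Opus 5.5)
Your proof is correct and is essentially the paper's argument: monotonicity of $\cla$ and $\inte$ transfers each hypothesis on $(G_i,E)$ to the union $(G,E)$, and taking the soft union over $i$ finishes. One correction to your closing aside: the identical argument does work for soft $\aur$-$\alpha$-open sets (and for soft $\aur$-$b$-open sets), because $\inte\circ\cla\circ\inte$ is also a composition of monotone operators, so $\alpha$-open sets are presumably omitted for another reason — the paper's Remark~\ref{rem:alpha_intersection} concerns finite intersections of $\alpha$-open sets, not unions.
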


\begin{proof}
(i) Let $(G_i, E) \softleq \cla(\inte(G_i, E))$ for each $i$ and $(G, E) = \Softcup_{i} (G_i, E)$. Since $\inte(G_i, E) \softleq \inte(G, E)$ by monotonicity, $\cla(\inte(G_i, E)) \softleq \cla(\inte(G, E))$. Hence $(G_i, E) \softleq \cla(\inte(G, E))$ for each $i$, giving $(G, E) \softleq \cla(\inte(G, E))$.

(ii) Since $\cla(G_i, E) \softleq \cla(G, E)$, we get $\inte(\cla(G_i, E)) \softleq \inte(\cla(G, E))$, hence $(G_i, E) \softleq \inte(\cla(G, E))$ for each $i$.

(iii) Similarly, using $\cla(G_i, E) \softleq \cla(G, E)$ and monotonicity of $\inte$ and $\cla$.
\end{proof}

\begin{remark}\label{rem:alpha_intersection}
Since $\cla$ is a \v{C}ech operator (not idempotent), the finite intersection of soft $\aur$-$\alpha$-open sets need not be soft $\aur$-$\alpha$-open. When $\cla$ is replaced by the Kuratowski closure $\cla^\infty$, the standard finite intersection property for $\alpha$-open sets is restored.
\end{remark}

\section{Soft Aura-Continuity}\label{sec:continuity}

Let $(X, \stau_X, \aur_E, E)$ and $(Y, \stau_Y, \mathfrak{b}_K, K)$ be soft aura topological spaces and $f_{up} = (u, p)$ a soft mapping from $(X, E)$ to $(Y, K)$.

\begin{definition}\label{def:softcont}
The soft mapping $f_{up}$ is called:
\begin{enumerate}[label=(\alph*)]
\item \emph{soft $\aur$-continuous} if $f_{up}^{-1}(V, K)$ is soft $\aur$-open for every soft $\mathfrak{b}$-open set $(V, K)$ in $Y$;
\item \emph{soft $\aur$-semi-continuous} if $f_{up}^{-1}(V, K)$ is soft $\aur$-semi-open for every soft $\mathfrak{b}$-open set $(V, K)$;
\item \emph{soft $\aur$-pre-continuous} if $f_{up}^{-1}(V, K)$ is soft $\aur$-pre-open for every soft $\mathfrak{b}$-open set $(V, K)$;
\item \emph{soft $\aur$-$\alpha$-continuous} if $f_{up}^{-1}(V, K)$ is soft $\aur$-$\alpha$-open for every soft $\mathfrak{b}$-open set $(V, K)$;
\item \emph{soft $\aur$-$\beta$-continuous} if $f_{up}^{-1}(V, K)$ is soft $\aur$-$\beta$-open for every soft $\mathfrak{b}$-open set $(V, K)$.
\end{enumerate}
\end{definition}

\begin{theorem}\label{thm:conthierarchy}
\[
\text{soft } \aur\text{-continuous} \Rightarrow \text{soft } \aur\text{-}\alpha\text{-continuous} \Rightarrow
\begin{cases}
\text{soft } \aur\text{-semi-continuous} \\
\text{soft } \aur\text{-pre-continuous}
\end{cases}
\Rightarrow \text{soft } \aur\text{-}\beta\text{-continuous}.
\]
\end{theorem}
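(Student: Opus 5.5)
The plan is to reduce the whole statement pointwise to Theorem~\ref{thm:hierarchy}. Each continuity notion in Definition~\ref{def:softcont} has the same form: $f_{up}^{-1}(V,K)$ lies in a prescribed class of soft sets for every soft $\mathfrak{b}$-open $(V,K)$. So if the classes are nested, the continuity notions are nested in the same way. Concretely, fix a soft $\mathfrak{b}$-open set $(V,K)$ in $Y$ and write $(G,E)=f_{up}^{-1}(V,K)$. Each implication then follows by applying the matching implication of Theorem~\ref{thm:hierarchy} to $(G,E)$.

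The steps, in order, are as follows.
\begin{enumerate}[label=(\arabic*)]
\item If $f_{up}$ is soft $\aur$-continuous, then $(G,E)$ is soft $\aur$-open. By the first implication of Theorem~\ref{thm:hierarchy} it is soft $\aur$-$\alpha$-open, so $f_{up}$ is soft $\aur$-$\alpha$-continuous.
\item If $f_{up}$ is soft $\aur$-$\alpha$-continuous, then $(G,E)$ is soft $\aur$-$\alpha$-open. The two middle implications of Theorem~\ref{thm:hierarchy} make it both soft $\aur$-semi-open and soft $\aur$-pre-open, which gives both branches.
\item If $f_{up}$ is soft $\aur$-semi-continuous or soft $\aur$-pre-continuous, then $(G,E)$ is soft $\aur$-semi-open or soft $\aur$-pre-open. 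Composing the remaining implications of Theorem~\ref{thm:hierarchy} (semi/pre-open $\Rightarrow$ $b$-open $\Rightarrow$ $\beta$-open) shows it is soft $\aur$-$\beta$-open. Hence $f_{up}$ is soft $\aur$-$\beta$-continuous.
\end{enumerate}
Since $(V,K)$ was arbitrary, each implication holds.

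None of these steps is a real obstacle; the argument is bookkeeping. The only point needing care is step (3). No "$b$-continuity" appears in Definition~\ref{def:softcont}, so the passage through soft $\aur$-$b$-open sets is used only as an intermediate set-level fact and never as a continuity notion. If one prefers to avoid $b$-openness, a direct check works. For a semi-open set, $\inte(G,E)\softleq\inte(\cla(G,E))$ by enlargement and monotonicity, so $\cla(\inte(G,E))\softleq\cla(\inte(\cla(G,E)))$. For a pre-open set, $\inte(\cla(G,E))\softleq\cla(\inte(\cla(G,E)))$ by enlargement. Both arguments rely only on Theorems~\ref{thm:closureprops} and~\ref{thm:interiorprops}.
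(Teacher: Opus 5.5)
Your proposal is correct and follows the paper's route: the paper's proof is the single sentence that these implications follow directly from Theorem~\ref{thm:hierarchy}. You spell out that reduction by applying the matching set-level implications to $f_{up}^{-1}(V,K)$ for each soft $\mathfrak{b}$-open $(V,K)$; your optional direct check of semi/pre-open $\Rightarrow$ $\beta$-open is also valid.
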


\begin{proof}
These follow directly from Theorem~\ref{thm:hierarchy}.
\end{proof}

\begin{theorem}[Decomposition]\label{thm:decomp}
Let the generalized open-set classes be defined with respect to the Kuratowski closure $\cla^\infty$ from Theorem~\ref{thm:kuratowski}. Then a soft mapping $f_{up}$ is soft $\aur$-$\alpha$-continuous (w.r.t.\ $\cla^\infty$) if and only if it is both soft $\aur$-semi-continuous and soft $\aur$-pre-continuous (w.r.t.\ $\cla^\infty$).
\end{theorem}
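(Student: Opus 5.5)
The plan is to reduce the statement to a purely set-level equivalence and then prove that equivalence with the classical Njåstad-type argument. Throughout, write $C := \cla^\infty$. By Theorem~\ref{thm:kuratowski}, $C$ is a soft Kuratowski closure. Let $I$ be its dual, $I(G,E) := (C((G,E)^c))^c$, mirroring Theorem~\ref{thm:duality}. I read ``w.r.t.\ $\cla^\infty$'' as replacing $\cla$ by $C$ and $\inte$ by $I$ in Definition~\ref{def:genopen}; pinning down this reading is the first step, since the statement does not say which interior accompanies $\cla^\infty$.

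Dualising the Kuratowski axioms for $C$ gives the standard properties of $I$:
\begin{itemize}
\item $I(G,E) \softleq (G,E)$;
\item $I$ is monotone;
\item $I(I(G,E)) = I(G,E)$.
\end{itemize}
Each follows parameterwise from enlargement, monotonicity and idempotency of $C$, by taking complements.

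The continuity statement then follows from a set-level claim: a soft set $(G,E)$ is $\alpha$-open w.r.t.\ $(C,I)$ if and only if it is both semi-open and pre-open w.r.t.\ $(C,I)$. The class of soft $\mathfrak b$-open sets in $Y$ is the same on both sides. So $f_{up}$ is $\alpha$-continuous iff every preimage $f_{up}^{-1}(V,K)$ is $\alpha$-open. By the claim, this holds iff every such preimage is both semi-open and pre-open, which is exactly semi-continuity together with pre-continuity.

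For the forward direction of the claim I reuse the proof of Theorem~\ref{thm:hierarchy} verbatim. It only used $I(A)\softleq A$ and monotonicity of $C$ and $I$, and these hold for the pair $(C,I)$.

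The converse is the key step and the only place where idempotency is needed. Assume $(G,E) \softleq C(I(G,E))$ and $(G,E) \softleq I(C(G,E))$.
\begin{enumerate}
\item Apply $C$ to the first inclusion, using monotonicity and idempotency: $C(G,E) \softleq C(C(I(G,E))) = C(I(G,E))$.
\item Apply the monotone operator $I$: $I(C(G,E)) \softleq I(C(I(G,E)))$.
\item Combine with the pre-open hypothesis: $(G,E) \softleq I(C(G,E)) \softleq I(C(I(G,E)))$, so $(G,E)$ is $\alpha$-open.
\end{enumerate}
All inclusions are checked parameterwise, so nothing specifically soft intervenes.

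The main obstacle is conceptual rather than computational. The argument genuinely needs idempotency of $C$; with the Čech operator $\cla$ of Theorem~\ref{thm:closureprops}, step~1 fails, consistent with Remark~\ref{rem:notidempotent}. It also needs the interior in the definitions to be the dual of $C$, not the original $\inte$. I would state this convention explicitly and verify the three listed properties of $I$ before running the chain of inclusions.
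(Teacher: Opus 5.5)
Your proof is correct and follows essentially the same route as the paper: apply $\cla^\infty$ to the semi-open inclusion, use idempotency to get $\cla^\infty(G,E)\softleq\cla^\infty(\inte(G,E))$, then pass this through the monotone interior and combine it with the pre-open inclusion. The only divergence is that the paper pairs $\cla^\infty$ with the original $\inte$ rather than the dual of $\cla^\infty$; since your chain uses only deflation and monotonicity of the interior (Theorem~\ref{thm:interiorprops}(ii),(iii)), it works verbatim under that reading too, so your claim that the dual interior is genuinely needed is too strong.
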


\begin{proof}
$(\Rightarrow)$ follows from the hierarchy.

$(\Leftarrow)$ Let $(G, E) = f_{up}^{-1}(V, K)$ for some soft $\mathfrak{b}$-open $(V, K)$. By hypothesis, $(G, E) \softleq \cla^\infty(\inte(G, E))$ and $(G, E) \softleq \inte(\cla^\infty(G, E))$. From the first inclusion, monotonicity of $\cla^\infty$ gives $\cla^\infty(G, E) \softleq \cla^\infty(\cla^\infty(\inte(G, E))) = \cla^\infty(\inte(G, E))$, where the last equality uses idempotency. Hence $\cla^\infty(G, E) = \cla^\infty(\inte(G, E))$. Substituting into the second: $(G, E) \softleq \inte(\cla^\infty(G, E)) = \inte(\cla^\infty(\inte(G, E)))$.
\end{proof}

\begin{remark}\label{rem:decomp_cech}
For the \v{C}ech operator $\cla$ itself, the decomposition may fail due to non-idempotency. The Kuratowski closure $\cla^\infty$ restores the classical decomposition.
\end{remark}

\begin{theorem}\label{thm:composition}
The composition of two soft $\aur$-continuous mappings is soft $\aur$-continuous.
\end{theorem}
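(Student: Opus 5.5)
The plan is to set up three soft aura topological spaces and two soft mappings, and then show that soft $\aur$-continuity is nothing more than ordinary soft continuity with respect to the soft topologies of Theorem~\ref{thm:auratopology}. Let the spaces be $(X,\stau_X,\aur_E,E)$, $(Y,\stau_Y,\mathfrak{b}_K,K)$ and $(Z,\stau_Z,\mathfrak{c}_L,L)$. Let $f_{up}=(u,p):(X,E)\to(Y,K)$ be soft $\aur$-continuous, and let $g_{vq}=(v,q):(Y,K)\to(Z,L)$ be soft $\mathfrak{b}$-continuous. Definition~\ref{def:softcont} applied to $g_{vq}$, with $Y$ as domain, means that preimages of soft $\mathfrak{c}$-open sets are soft $\mathfrak{b}$-open. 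The composite is $g_{vq}\circ f_{up}=(v\circ u,\,q\circ p)$. We must show that $(g_{vq}\circ f_{up})^{-1}(W,L)$ is soft $\aur$-open for every soft $\mathfrak{c}$-open $(W,L)$.

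The first and key step is the functoriality of soft inverse images:
\[
(g_{vq}\circ f_{up})^{-1}(W,L)=f_{up}^{-1}\bigl(g_{vq}^{-1}(W,L)\bigr).
\]
I will verify this parameterwise from the definition in Section~\ref{sec:prelim}. Write $g_{vq}^{-1}(W,L)=(V,K)$, so that $V(k)=v^{-1}(W(q(k)))$. Then for each $e\in E$,
\[
f_{up}^{-1}(V,K)(e)=u^{-1}\bigl(V(p(e))\bigr)=u^{-1}\bigl(v^{-1}(W(q(p(e))))\bigr)=(v\circ u)^{-1}\bigl(W((q\circ p)(e))\bigr).
\]
The right-hand side is exactly the $e$-component of $(g_{vq}\circ f_{up})^{-1}(W,L)$.

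With this identity the argument closes in two moves. Since $(W,L)$ is soft $\mathfrak{c}$-open, the hypothesis on $g_{vq}$ makes $(V,K)$ soft $\mathfrak{b}$-open. Then the hypothesis on $f_{up}$ makes $f_{up}^{-1}(V,K)$ soft $\aur$-open. Equivalently, this is the standard fact that a composite of continuous maps is continuous, applied to the soft topologies $\stau_{X,\aur}$, $\stau_{Y,\mathfrak{b}}$, $\stau_{Z,\mathfrak{c}}$.

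I do not expect a real obstacle. The only delicate point is bookkeeping: soft $\aur$-continuity of $g_{vq}$ must be read relative to the scope function $\mathfrak{b}_K$ of its own domain, not $\aur_E$. The parameter maps must also compose as $q\circ p$, and that is what makes the parameterwise identity above hold.
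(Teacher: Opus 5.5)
Your proof is correct and follows the paper's argument exactly. You pull a soft $\mathfrak{c}$-open set back through $g_{vq}$ to a soft $\mathfrak{b}$-open set, then through $f_{up}$ to a soft $\aur$-open set, using $(g_{vq}\circ f_{up})^{-1}=f_{up}^{-1}\circ g_{vq}^{-1}$; the only addition is that you verify this identity parameterwise via the composite $(v\circ u,\,q\circ p)$, which the paper simply asserts.
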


\begin{proof}
Let $f_{up} : (X, \stau_X, \aur_E, E) \to (Y, \stau_Y, \mathfrak{b}_K, K)$ and $g_{vq} : (Y, \stau_Y, \mathfrak{b}_K, K) \to (Z, \stau_Z, \mathfrak{c}_L, L)$ be soft $\aur$-continuous and soft $\mathfrak{b}$-continuous, respectively. For any soft $\mathfrak{c}$-open set $(W, L)$ in $Z$, $g_{vq}^{-1}(W, L)$ is soft $\mathfrak{b}$-open, and $f_{up}^{-1}(g_{vq}^{-1}(W, L))$ is soft $\aur$-open. Since $(g_{vq} \circ f_{up})^{-1} = f_{up}^{-1} \circ g_{vq}^{-1}$, the composition is soft aura-continuous.
\end{proof}

\begin{theorem}\label{thm:closure_char}
A soft mapping $f_{up}$ is soft $\aur$-continuous if and only if $\cla(f_{up}^{-1}(G, K)) \softleq f_{up}^{-1}(\mathrm{cl}_\mathfrak{b}(G, K))$ for every $(G, K) \in \mathrm{SS}(Y, K)$.
\end{theorem}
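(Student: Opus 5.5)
The plan is to work parameterwise throughout. I will use the duality of Theorem~\ref{thm:duality} on both sides, together with the fact that soft inverse images commute with soft complements: $f_{up}^{-1}((G,K)^c)=(f_{up}^{-1}(G,K))^c$, since $u^{-1}(Y\setminus G(p(e)))=X\setminus u^{-1}(G(p(e)))$. With these two tools, the closure inclusion in the statement turns into an interior inclusion, and the argument becomes a comparison of $\inte$ with $\mathrm{int}_\mathfrak{b}$.

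\emph{Sufficiency.} Assume the inclusion holds for every $(G,K)$, and let $(V,K)$ be soft $\mathfrak{b}$-open.
\begin{enumerate}[label=(\arabic*)]
\item Put $(G,K)=(V,K)^c$. By the duality theorem on $Y$, $\mathrm{cl}_\mathfrak{b}((V,K)^c)=(\mathrm{int}_\mathfrak{b}(V,K))^c=(V,K)^c$.
\item The hypothesis then gives $\cla\bigl((f_{up}^{-1}(V,K))^c\bigr)\softleq (f_{up}^{-1}(V,K))^c$.
\item Taking complements and applying Theorem~\ref{thm:duality} on $X$ yields $f_{up}^{-1}(V,K)\softleq \inte(f_{up}^{-1}(V,K))$.
\item Theorem~\ref{thm:interiorprops}(ii) gives the reverse inclusion, so $f_{up}^{-1}(V,K)$ is soft $\aur$-open. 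Hence $f_{up}$ is soft $\aur$-continuous.
\end{enumerate}

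\emph{Necessity.} Assume $f_{up}$ is soft $\aur$-continuous, and fix $(G,K)$, $e\in E$ and $x$ with $x\notin f_{up}^{-1}(\mathrm{cl}_\mathfrak{b}(G,K))(e)$. By the definition of $\mathrm{cl}_\mathfrak{b}$, this means $\mathfrak{b}_K(u(x))(p(e))\cap G(p(e))=\emptyset$. The goal is to show $x\notin\cla(f_{up}^{-1}(G,K))(e)$, i.e.\ $\aur_E(x)(e)\cap u^{-1}(G(p(e)))=\emptyset$. I will look for a soft $\mathfrak{b}$-open $(W,K)$ with $u(x)\in W(p(e))$ and $W(p(e))\cap G(p(e))=\emptyset$. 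Continuity makes $f_{up}^{-1}(W,K)$ soft $\aur$-open, which gives $\aur_E(x)(e)\subseteq u^{-1}(W(p(e)))$, and this set is disjoint from $u^{-1}(G(p(e)))$. That is exactly the required conclusion.

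The natural candidate is $(W,K)=(\mathrm{cl}_\mathfrak{b}(G,K))^c=\mathrm{int}_\mathfrak{b}((G,K)^c)$, which contains $u(x)$ at $p(e)$ and misses $G$ there. Equivalently, in dual form I will show $f_{up}^{-1}(\mathrm{int}_\mathfrak{b}(G^c,K))\softleq\inte(f_{up}^{-1}(G^c,K))$ and then complement.

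\emph{Main obstacle.} The difficulty is that $\mathrm{int}_\mathfrak{b}((G,K)^c)$ must be soft $\mathfrak{b}$-open. This is precisely idempotency of $\mathrm{int}_\mathfrak{b}$, which Remark~\ref{rem:notidempotent} warns can fail.
\begin{itemize}
\item The approach I will try first avoids needing the whole set to be open. I will instead try to take $(W,K)$ to be a soft $\mathfrak{b}$-open set containing $u(x)$ at $p(e)$ and contained in $\mathfrak{b}_K(u(x))$.
\item If no such set is available directly, I will use the soft topology $\stau_\mathfrak{b}$ of Theorem~\ref{thm:auratopology}: take the union of all soft $\mathfrak{b}$-open sets contained in $(G,K)^c$, which is $\mathfrak{b}$-open by that theorem.
\item I then have to check that $u(x)$ lies in this union at parameter $p(e)$. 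This is the delicate point, and if idempotency genuinely fails it may need an additional assumption, namely that $\mathrm{cl}_\mathfrak{b}$ is idempotent. I would record that assumption explicitly.
\end{itemize}
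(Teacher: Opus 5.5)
Your sufficiency argument is correct and is the same as the paper's. The necessity part, however, does not close, and in fact cannot. The obstruction you isolated is fatal: with $\mathrm{cl}_\mathfrak{b}$ read as the aura closure on $Y$, the implication ``soft $\aur$-continuous $\Rightarrow$ $\cla(f_{up}^{-1}(G,K))\softleq f_{up}^{-1}(\mathrm{cl}_\mathfrak{b}(G,K))$'' is false, already with one parameter.

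\textbf{Counterexample.} Take $E=K=\{k\}$ and $X=Y=\{a,b,c\}$, both with the discrete soft topology, and let $u$ and $p$ be identities. Define the scopes at $k$ by $\aur_E(a)=\{a,b,c\}$, $\aur_E(b)=\{b,c\}$, $\aur_E(c)=\{c\}$, and $\mathfrak{b}_K(a)=\{a,b\}$, $\mathfrak{b}_K(b)=\{b,c\}$, $\mathfrak{b}_K(c)=\{c\}$.
\begin{enumerate}[label=(\roman*)]
\item In both spaces the aura-open soft sets are exactly those with value $\emptyset$, $\{c\}$, $\{b,c\}$ or the whole set. Hence $f_{up}$ is soft $\aur$-continuous.
\item Take $G(k)=\{c\}$. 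Then $\cla(f_{up}^{-1}(G,K))(k)=\{a,b,c\}$, while $f_{up}^{-1}(\mathrm{cl}_\mathfrak{b}(G,K))(k)=\{b,c\}$, so the inclusion fails at $a$.
\item Here $\mathrm{cl}_\mathfrak{b}(G,K)$ is not soft $\mathfrak{b}$-closed. The only soft $\mathfrak{b}$-open set containing $a$ is $\softabs$, and the union of the soft $\mathfrak{b}$-open sets inside $(G,K)^c$ is $\softnull$. So neither of your first two fallbacks can produce the required $(W,K)$.
\end{enumerate}

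\textbf{Why it fails in general.} Test the inclusion on soft sets whose value at $p(e)$ is a singleton. This shows the stated condition is equivalent to $u(\aur_E(x)(e))\subseteq\mathfrak{b}_K(u(x))(p(e))$ for all $x$ and $e$. That is strictly stronger than continuity. Continuity only puts $u(\aur_E(x)(e))$ inside the smallest soft $\mathfrak{b}$-open set around $u(x)$ at $p(e)$, namely the points reachable by iterating $\mathfrak{b}_K$.

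\textbf{Comparison with the paper.} The paper's proof of $(\Rightarrow)$ does not supply a missing idea here. It simply asserts that $\mathrm{cl}_\mathfrak{b}(G,K)$ is soft $\mathfrak{b}$-closed. That is exactly the idempotency you flagged, and Remark~\ref{rem:notidempotent} says it can fail.

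\textbf{How to repair it.} Your last bullet is the right fix.
\begin{itemize}
\item Assume $\mathrm{cl}_\mathfrak{b}$ is idempotent. Then $(W,K)=(\mathrm{cl}_\mathfrak{b}(G,K))^c=\mathrm{int}_\mathfrak{b}((G,K)^c)$ is soft $\mathfrak{b}$-open, and your pointwise argument is complete. It is just the pointwise form of the paper's monotonicity argument.
\item Alternatively, with no extra hypothesis, replace $\mathrm{cl}_\mathfrak{b}$ in the statement by the Kuratowski closure $\mathrm{cl}_\mathfrak{b}^\infty$ of Theorem~\ref{thm:kuratowski}. This is the closure operator of the soft topology $\stau_\mathfrak{b}$. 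Both of your directions then go through with the same arguments, using $(W,K)=(\mathrm{cl}_\mathfrak{b}^\infty(G,K))^c$ for necessity.
\end{itemize}
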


\begin{proof}
$(\Rightarrow)$ If $(C, K)$ is soft $\mathfrak{b}$-closed, then $f_{up}^{-1}(C, K)$ is soft $\aur$-closed, so $\cla(f_{up}^{-1}(C, K)) = f_{up}^{-1}(C, K)$. For arbitrary $(G, K)$, since $(G, K) \softleq \mathrm{cl}_\mathfrak{b}(G, K)$ and $\mathrm{cl}_\mathfrak{b}(G, K)$ is soft $\mathfrak{b}$-closed: $\cla(f_{up}^{-1}(G, K)) \softleq \cla(f_{up}^{-1}(\mathrm{cl}_\mathfrak{b}(G, K))) = f_{up}^{-1}(\mathrm{cl}_\mathfrak{b}(G, K))$.

$(\Leftarrow)$ If $(V, K)$ is soft $\mathfrak{b}$-open, then $(V, K)^c$ is soft $\mathfrak{b}$-closed and $\mathrm{cl}_\mathfrak{b}((V, K)^c) = (V, K)^c$. By hypothesis, $\cla(f_{up}^{-1}((V, K)^c)) \softleq f_{up}^{-1}((V, K)^c)$. Combined with enlargement, $\cla(f_{up}^{-1}((V, K)^c)) = f_{up}^{-1}((V, K)^c)$, so $f_{up}^{-1}(V, K)$ is soft $\aur$-open.
\end{proof}

\section{Separation Axioms}\label{sec:separation}

\begin{definition}\label{def:separation}
Let $(X, \stau, \aur_E, E)$ be a soft aura topological space.
\begin{enumerate}[label=(\alph*)]
\item $(X, \stau, \aur_E, E)$ is \emph{soft $\aur$-$T_0$} if for every pair of distinct points $x, y \in X$, there exists $e \in E$ such that $y \notin \aur_E(x)(e)$ or $x \notin \aur_E(y)(e)$.
\item $(X, \stau, \aur_E, E)$ is \emph{soft $\aur$-$T_1$} if for every pair of distinct points $x, y \in X$ and every $e \in E$, $y \notin \aur_E(x)(e)$ and $x \notin \aur_E(y)(e)$.
\item $(X, \stau, \aur_E, E)$ is \emph{soft $\aur$-$T_2$} (soft $\aur$-Hausdorff) if for every pair of distinct points $x, y \in X$ and every $e \in E$, $\aur_E(x)(e) \cap \aur_E(y)(e) = \emptyset$.
\item $(X, \stau, \aur_E, E)$ is \emph{soft $\aur$-regular} if for every $x \in X$, every $e \in E$, and every soft $\aur$-closed set $(C, E)$ with $x \notin C(e)$, there exist soft $\aur$-open sets $(U, E)$ and $(V, E)$ with $x \in U(e)$, $C(e) \subseteq V(e)$, and $U(e) \cap V(e) = \emptyset$.
\item $(X, \stau, \aur_E, E)$ is \emph{soft $\aur$-$T_3$} if it is soft $\aur$-regular and soft $\aur$-$T_1$.
\end{enumerate}
\end{definition}

The following characterization is fundamental.

\begin{theorem}\label{thm:T1characterization}
$(X, \stau, \aur_E, E)$ is soft $\aur$-$T_1$ if and only if $\aur_E(x)(e) = \{x\}$ for every $x \in X$ and every $e \in E$.
\end{theorem}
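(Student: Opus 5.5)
We prove the two implications separately. Each one rests only on Definition~\ref{def:separation}(b) and the membership axiom of Definition~\ref{def:softscope}.

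For the forward direction, assume $(X, \stau, \aur_E, E)$ is soft $\aur$-$T_1$, and fix $x \in X$ and $e \in E$. The axiom $x \in \aur_E(x)(e)$ gives $\{x\} \subseteq \aur_E(x)(e)$. For the reverse inclusion, take any $y \in \aur_E(x)(e)$ and suppose $y \neq x$. Applying Definition~\ref{def:separation}(b) to the distinct pair $x, y$ at this same parameter $e$ gives $y \notin \aur_E(x)(e)$, which is a contradiction. Hence $\aur_E(x)(e) = \{x\}$.

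For the converse, assume $\aur_E(z)(e) = \{z\}$ for all $z \in X$ and $e \in E$. Let $x \neq y$ and $e \in E$. Then $\aur_E(x)(e) = \{x\}$ does not contain $y$, and $\aur_E(y)(e) = \{y\}$ does not contain $x$. These are exactly the two conditions required by Definition~\ref{def:separation}(b), so the space is soft $\aur$-$T_1$.

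No step is a genuine obstacle. The one point that needs care is that Definition~\ref{def:separation}(b) quantifies over \emph{every} $e$, unlike the existential quantifier in the $T_0$ case. This universal quantifier is what allows the forward direction to be run parameter by parameter.

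We need not check separately that the condition is compatible with $\aur_E(x)$ being soft open. The characterization is stated for a given soft aura topological space, so $\aur_E(x) \in \stau$ is already assumed; the argument only identifies its parameterwise values.
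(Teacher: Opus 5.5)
Your proof is correct and follows essentially the same route as the paper. The paper phrases the forward direction as a contrapositive: if some $\aur_E(x)(e) \neq \{x\}$, membership yields $y \neq x$ in $\aur_E(x)(e)$, contradicting $T_1$ at that parameter $e$. You run the same argument directly, and you check both non-memberships explicitly in the converse, where the paper states one and leaves the other to symmetry.
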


\begin{proof}
$(\Leftarrow)$ If $\aur_E(x)(e) = \{x\}$ for all $x, e$, then for $y \neq x$, $y \notin \{x\} = \aur_E(x)(e)$ for every $e$.

$(\Rightarrow)$ Suppose $\aur_E(x)(e) \neq \{x\}$ for some $x$ and $e$. Since $x \in \aur_E(x)(e)$, there exists $y \neq x$ with $y \in \aur_E(x)(e)$. But then $y \in \aur_E(x)(e)$ at this particular $e$, contradicting the $T_1$ requirement that $y \notin \aur_E(x)(e)$ for all $e$.
\end{proof}

\begin{proposition}\label{prop:T1equivT2}
In a soft aura topological space, soft $\aur$-$T_1$ and soft $\aur$-$T_2$ are equivalent.
\end{proposition}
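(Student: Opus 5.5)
We prove the two implications separately, using the singleton characterization of Theorem~\ref{thm:T1characterization} for $T_1 \Rightarrow T_2$ and only the membership axiom of Definition~\ref{def:softscope} for $T_2 \Rightarrow T_1$. Both directions reduce to parameterwise statements about the sets $\aur_E(x)(e)$.

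\emph{Soft $\aur$-$T_1 \Rightarrow$ soft $\aur$-$T_2$.} By Theorem~\ref{thm:T1characterization}, soft $\aur$-$T_1$ forces $\aur_E(x)(e) = \{x\}$ for every $x \in X$ and $e \in E$. Let $x \neq y$ and fix any $e \in E$. Then $\aur_E(x)(e) \cap \aur_E(y)(e) = \{x\} \cap \{y\} = \emptyset$. This is exactly condition (c) of Definition~\ref{def:separation}.

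\emph{Soft $\aur$-$T_2 \Rightarrow$ soft $\aur$-$T_1$.} Let $x \neq y$ and $e \in E$, and assume $\aur_E(x)(e) \cap \aur_E(y)(e) = \emptyset$.
\begin{enumerate}[label=(\alph*)]
\item Suppose $y \in \aur_E(x)(e)$. The membership axiom gives $y \in \aur_E(y)(e)$ as well, so $y$ lies in the intersection, a contradiction.
\item Symmetrically, $x \in \aur_E(y)(e)$ would put $x$ in the intersection, since $x \in \aur_E(x)(e)$.
\end{enumerate}
Hence $y \notin \aur_E(x)(e)$ and $x \notin \aur_E(y)(e)$ for every $e \in E$, which is condition (b). Equivalently, one can show directly that $\aur_E(x)(e) = \{x\}$: any $z \neq x$ in $\aur_E(x)(e)$ would also lie in $\aur_E(z)(e)$, contradicting disjointness. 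Theorem~\ref{thm:T1characterization} then finishes this direction.

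There is no real obstacle here. The only point needing care is the order of quantifiers: both axioms are universal over $e \in E$, unlike $T_0$. We must therefore apply the membership axiom at the same parameter $e$ at which disjointness is assumed, and this parameterwise matching is precisely what makes the argument go through.
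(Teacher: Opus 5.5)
Your proof is correct and follows the paper's own argument essentially verbatim. In both, $T_1 \Rightarrow T_2$ goes through the singleton characterization of Theorem~\ref{thm:T1characterization}, and $T_2 \Rightarrow T_1$ goes through the membership axiom applied at the same parameter at which disjointness is assumed.
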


\begin{proof}
Soft $\aur$-$T_2$ $\Rightarrow$ soft $\aur$-$T_1$: If $\aur_E(x)(e) \cap \aur_E(y)(e) = \emptyset$ for all $e$, then since $y \in \aur_E(y)(e)$, we have $y \notin \aur_E(x)(e)$; similarly $x \notin \aur_E(y)(e)$.

Soft $\aur$-$T_1$ $\Rightarrow$ soft $\aur$-$T_2$: By Theorem~\ref{thm:T1characterization}, $\aur_E(x)(e) = \{x\}$ for all $x, e$. Hence $\aur_E(x)(e) \cap \aur_E(y)(e) = \{x\} \cap \{y\} = \emptyset$ for $x \neq y$.
\end{proof}

\begin{remark}\label{rem:T1_collapses}
The equivalence $T_1 \Leftrightarrow T_2$ is in fact a consequence of the scope-based formulation itself: the $T_1$ condition requires $y \notin \aur_E(x)(e)$ for \emph{all} $y \neq x$, which forces every scope to be a singleton, and this mechanism operates identically in both the crisp \cite{AcikgozAura} and soft settings. The genuinely distinctive soft phenomenon lies in the gap between $T_0$ and $T_1$: in the crisp case, $T_0$ requires asymmetric separation for each pair of points via a single scope function, whereas in the soft case, $T_0$ requires separation at merely \emph{some} parameter $e \in E$ while $T_1$ demands it at \emph{every} parameter simultaneously. This parameter-dependent gap makes the $T_0 \not\Rightarrow T_1$ separation substantially wider in the soft setting, as illustrated by Example~\ref{ex:T0notT1}.
\end{remark}

\begin{theorem}\label{thm:sepimplications}
Soft $\aur$-$T_3$ $\Rightarrow$ soft $\aur$-$T_2$ $\Leftrightarrow$ soft $\aur$-$T_1$ $\Rightarrow$ soft $\aur$-$T_0$.
\end{theorem}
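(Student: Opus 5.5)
The chain splits into three implications, and I will prove them one at a time, taking the middle equivalence as already settled.

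\emph{Soft $\aur$-$T_3 \Rightarrow$ soft $\aur$-$T_2$.} By Definition~\ref{def:separation}(e), a soft $\aur$-$T_3$ space is in particular soft $\aur$-$T_1$. Proposition~\ref{prop:T1equivT2} then gives soft $\aur$-$T_2$. The regularity clause plays no role here, so this step is immediate.

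\emph{Soft $\aur$-$T_2 \Leftrightarrow$ soft $\aur$-$T_1$.} This is exactly Proposition~\ref{prop:T1equivT2}, and I will cite it directly.

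\emph{Soft $\aur$-$T_1 \Rightarrow$ soft $\aur$-$T_0$.} This is the only step that needs a small argument, and even that is just an instantiation of quantifiers. Fix distinct $x,y\in X$. Since $E$ is assumed nonempty in the Preliminaries, we may choose some $e_0\in E$. The $T_1$ condition holds for every $e\in E$, so in particular at $e_0$ it gives $y\notin\aur_E(x)(e_0)$. Hence the disjunction required in Definition~\ref{def:separation}(a) holds at $e_0$, and the space is soft $\aur$-$T_0$. Alternatively, one can use Theorem~\ref{thm:T1characterization}: $\aur_E(x)(e_0)=\{x\}$, so $y\notin\aur_E(x)(e_0)$.

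The only point needing care is the nonemptiness of $E$, which is what makes the passage from ``for every $e$'' to ``there exists $e$'' legitimate. Since $E\neq\emptyset$ is a standing assumption, I expect no genuine obstacle in any of the three steps.
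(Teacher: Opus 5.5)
Your proposal is correct and follows the paper's own proof. Both obtain $T_3\Rightarrow T_2$ from the $T_1$ clause built into $T_3$ together with Proposition~\ref{prop:T1equivT2}, cite that proposition for the middle equivalence, and get $T_1\Rightarrow T_0$ by weakening ``for every $e$'' to ``for some $e$''; your remark that this last step uses $E\neq\emptyset$ makes explicit what the paper leaves to its standing assumptions.
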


\begin{proof}
The equivalence $T_1 \Leftrightarrow T_2$ is Proposition~\ref{prop:T1equivT2}. The implication $T_1 \Rightarrow T_0$ is immediate (the condition for all $e$ implies the condition for some $e$). For $T_3 \Rightarrow T_2$: soft $\aur$-$T_3$ includes soft $\aur$-$T_1$, which is equivalent to soft $\aur$-$T_2$.
\end{proof}

\begin{theorem}\label{thm:T1closure}
If $(X, \stau, \aur_E, E)$ is soft $\aur$-$T_1$, then $\cla(\{x\}_E) = \{x\}_E$ for every $x \in X$.
\end{theorem}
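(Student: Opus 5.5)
The plan is to compute $\cla(\{x\}_E)$ parameter by parameter, using Definition~\ref{def:softauraclosure} together with the characterization of soft $\aur$-$T_1$ spaces in Theorem~\ref{thm:T1characterization}. Write $\{x\}_E = (S, E)$, so that $S(e) = \{x\}$ for every $e \in E$. Fix a parameter $e \in E$ and a point $z \in X$. By definition, $z \in \cla(\{x\}_E)(e)$ exactly when $\aur_E(z)(e) \cap \{x\} \neq \emptyset$, that is, exactly when $x \in \aur_E(z)(e)$.

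Two inclusions then finish the argument. For $\{x\}_E \softleq \cla(\{x\}_E)$ I will cite enlargement, Theorem~\ref{thm:closureprops}(ii); this also follows directly from the scope axiom $x \in \aur_E(x)(e)$. For the reverse inclusion, take $z \neq x$. The soft $\aur$-$T_1$ hypothesis, or equivalently Theorem~\ref{thm:T1characterization} giving $\aur_E(z)(e) = \{z\}$, implies $x \notin \aur_E(z)(e)$. Hence $z \notin \cla(\{x\}_E)(e)$, so $\cla(\{x\}_E)(e) = \{x\}$ for every $e$. Equality of the soft sets then follows because they agree at every parameter.

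I expect no real obstacle here. The only point requiring care is the direction of membership: the closure at $z$ asks whether $x \in \aur_E(z)(e)$, not whether $z \in \aur_E(x)(e)$. Definition~\ref{def:separation}(b) supplies both non-memberships for every pair and every parameter, so either formulation suffices, and we can apply it with the roles $(z, x)$.
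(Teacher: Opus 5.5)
Your proposal is correct and follows essentially the same route as the paper. The paper also uses enlargement for one inclusion, and for the other applies Theorem~\ref{thm:T1characterization} to get $\aur_E(y)(e)=\{y\}$ for each $y\neq x$, so $y\notin\cla(\{x\}_E)(e)$.
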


\begin{proof}
By enlargement, $\{x\}_E \softleq \cla(\{x\}_E)$. Conversely, for $y \neq x$ and any $e \in E$: $\aur_E(y)(e) = \{y\}$ by Theorem~\ref{thm:T1characterization}, so $\aur_E(y)(e) \cap \{x\} = \emptyset$, giving $y \notin \cla(\{x\}_E)(e)$.
\end{proof}

\begin{example}\label{ex:T0notT1}
\emph{Soft $\aur$-$T_0$ but not soft $\aur$-$T_1$:} Let $X = \{x_1, x_2\}$, $E = \{e_1, e_2\}$, and let $\stau$ be the discrete soft topology. Define:
\[
\aur_E(x_1)(e_1) = \{x_1\},\; \aur_E(x_1)(e_2) = \{x_1, x_2\};\quad \aur_E(x_2)(e_1) = \{x_1, x_2\},\; \aur_E(x_2)(e_2) = \{x_2\}.
\]
At $e_1$: $x_2 \notin \aur_E(x_1)(e_1) = \{x_1\}$, so $T_0$ is satisfied. However, at $e_2$: $x_2 \in \aur_E(x_1)(e_2) = \{x_1, x_2\}$, so $T_1$ fails. This example also shows that the scope function can carry fundamentally different geometric information at different parameters.
\end{example}

\begin{theorem}\label{thm:scopedependence}
The separation properties depend on the choice of soft scope function. On a fixed soft topological space $(X, \stau, E)$ with $|X| \geq 2$:
\begin{enumerate}[label=(\roman*)]
\item The trivial soft scope function $\aur_E(x) = \softabs$ gives a space that is not soft $\aur$-$T_0$.
\item If $\stau$ contains enough soft open sets, a suitably chosen $\aur_E$ with singleton scopes makes the space soft $\aur$-$T_2$.
\end{enumerate}
\end{theorem}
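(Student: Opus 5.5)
Both parts reduce directly to the definitions in Definition~\ref{def:separation}, together with the singleton characterization of Theorem~\ref{thm:T1characterization} and the equivalence of Proposition~\ref{prop:T1equivT2}. The plan is to handle (i) by unwinding the $T_0$ condition for the trivial scope, and (ii) by making the phrase ``enough soft open sets'' precise and then invoking the earlier results.

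For (i), fix the trivial soft scope function $\aur_E(x)=\softabs$, so $\aur_E(x)(e)=X$ for all $x\in X$ and $e\in E$. Since $|X|\geq 2$, we may pick distinct $x,y\in X$. For every $e\in E$ we have $y\in X=\aur_E(x)(e)$ and $x\in X=\aur_E(y)(e)$. Hence no parameter $e$ satisfies ``$y\notin\aur_E(x)(e)$ or $x\notin\aur_E(y)(e)$''. The soft $\aur$-$T_0$ condition therefore fails for this pair, and the space is not soft $\aur$-$T_0$. This uses $E\neq\emptyset$ only implicitly: the existential quantifier over $E$ simply has no witness.

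For (ii), I would interpret ``$\stau$ contains enough soft open sets'' as the hypothesis that for each $x\in X$ the soft point $x_E$, with $F(e)=\{x\}$ for every $e$, belongs to $\stau$. This holds, for instance, when $\stau$ is the discrete soft topology. Under this hypothesis:
\begin{enumerate}[label=(\alph*)]
\item Define $\aur_E(x)=x_E$. This is a legitimate soft scope function, because $x_E\in\stau$ and $x\in\{x\}=\aur_E(x)(e)$ for every $e$.
\item Since every scope is a singleton, Theorem~\ref{thm:T1characterization} shows the space is soft $\aur$-$T_1$.
\item Proposition~\ref{prop:T1equivT2} then gives soft $\aur$-$T_2$. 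One can also check this directly: $\{x\}\cap\{y\}=\emptyset$ for $x\neq y$ at every parameter.
\end{enumerate}

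The only real obstacle is the vagueness of ``enough soft open sets''. I would state the precise sufficient hypothesis $x_E\in\stau$ for all $x$. I would also remark that it is necessary for the singleton-scope construction: by Theorem~\ref{thm:T1characterization}, a soft $\aur$-$T_2$ structure forces $\aur_E(x)=x_E\in\stau$. So the condition in (ii) is exactly ``$x_E\in\stau$ for every $x\in X$''.
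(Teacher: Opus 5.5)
Your proposal is correct and follows the paper's proof: (i) is the same observation that $\aur_E(x)(e)=X$ for all $x,e$, so no parameter separates any pair, and (ii) makes the same hypothesis (the paper's $(U_x,E)\in\stau$ with $U_x(e)=\{x\}$, i.e.\ $x_E\in\stau$) and the same choice $\aur_E(x)=x_E$. Your further remark that this hypothesis is also necessary, because soft $\aur$-$T_2$ forces every scope to be a singleton, is correct and sharpens the paper's vague ``enough soft open sets'' into an exact condition.
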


\begin{proof}
(i) $\aur_E(x)(e) = X$ for all $x, e$, so $y \in \aur_E(x)(e)$ for all $y$, violating $T_0$.

(ii) If for each $x$ there exists $(U_x, E) \in \stau$ with $U_x(e) = \{x\}$ for all $e$, set $\aur_E(x) = (U_x, E)$.
\end{proof}

\section{Soft Aura Rough Approximation}\label{sec:rough}

\begin{definition}\label{def:roughapprox}
Let $(X, \stau, \aur_E, E)$ be a soft aura topological space and $(G, E) \in \mathrm{SS}(X, E)$.
\begin{enumerate}[label=(\alph*)]
\item The \emph{soft aura lower approximation}: $\apr(G, E)(e) = \{x \in X : \aur_E(x)(e) \subseteq G(e)\}$.
\item The \emph{soft aura upper approximation}: $\APR(G, E)(e) = \{x \in X : \aur_E(x)(e) \cap G(e) \neq \emptyset\}$.
\item The \emph{soft aura boundary}: $\bnd(G, E)(e) = \APR(G, E)(e) \setminus \apr(G, E)(e)$.
\item The \emph{soft aura accuracy measure}:
\[
\rho_\aur(G, E) = \frac{\sum_{e \in E} |\apr(G, E)(e)|}{\sum_{e \in E} |\APR(G, E)(e)|}, \quad \text{when } \APR(G, E) \neq \softnull.
\]
\end{enumerate}
\end{definition}

\begin{theorem}\label{thm:roughprops}
The soft aura approximation operators satisfy:
\begin{enumerate}[label=(\roman*)]
\item $\apr(G, E) \softleq (G, E) \softleq \APR(G, E)$.
\item $\apr(\softnull) = \APR(\softnull) = \softnull$ and $\apr(\softabs) = \APR(\softabs) = \softabs$.
\item Monotonicity: $(G, E) \softleq (H, E) \Rightarrow \apr(G, E) \softleq \apr(H, E)$ and $\APR(G, E) \softleq \APR(H, E)$.
\item $\APR((G, E) \softcup (H, E)) = \APR(G, E) \softcup \APR(H, E)$.
\item $\apr((G, E) \softcap (H, E)) = \apr(G, E) \softcap \apr(H, E)$.
\item Duality: $\apr((G, E)^c) = (\APR(G, E))^c$ and $\APR((G, E)^c) = (\apr(G, E))^c$.
\item $0 \leq \rho_\aur(G, E) \leq 1$, with equality to $1$ iff $\bnd(G, E) = \softnull$.
\end{enumerate}
\end{theorem}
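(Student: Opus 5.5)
The plan is to observe first that, by Definitions~\ref{def:softaurainterior} and \ref{def:softauraclosure}, the two approximation operators are literally the aura operators: $\apr = \inte$ and $\APR = \cla$ parameterwise. Items (i)--(vi) then follow from results already proved, and only (vii) needs a separate argument.

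\textbf{Items (i)--(vi).}
\begin{itemize}
\item Item (i) combines Theorem~\ref{thm:interiorprops}(ii) with Theorem~\ref{thm:closureprops}(ii).
\item For (ii), $\APR(\softnull)=\softnull$ is grounding, Theorem~\ref{thm:closureprops}(i), and $\apr(\softabs)=\softabs$ is Theorem~\ref{thm:interiorprops}(i).
\item $\APR(\softabs)=\softabs$ follows from enlargement, since $\softabs \softleq \APR(\softabs)$.
\item For $\apr(\softnull)=\softnull$, I would use (i): $\apr(\softnull)\softleq\softnull$. Equivalently, the condition $\aur_E(x)(e)\subseteq\emptyset$ is impossible because $x\in\aur_E(x)(e)$.
\item Items (iii), (iv), (v) are the monotonicity, additivity and multiplicativity statements in Theorems~\ref{thm:closureprops} and \ref{thm:interiorprops}.
\item Item (vi) is Theorem~\ref{thm:duality}, read with $(G,E)$ replaced by $(G,E)^c$ where necessary.
\end{itemize}

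\textbf{Item (vii).} Write $L=\sum_e|\apr(G,E)(e)|$ and $U=\sum_e|\APR(G,E)(e)|$. I assume $X$ and $E$ are finite, or at least that these sums are finite; this is implicit in the definition of $\rho_\aur$. By (i), $\apr(G,E)(e)\subseteq\APR(G,E)(e)$ for every $e$, so $0\le L\le U$. Since $\APR(G,E)\neq\softnull$, we have $U>0$, which gives $0\le\rho_\aur\le1$.

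For the equality case, $\rho_\aur=1$ iff $L=U$. Because each term satisfies $|\apr(G,E)(e)|\le|\APR(G,E)(e)|$ and the sets are finite and nested, $L=U$ holds iff $\apr(G,E)(e)=\APR(G,E)(e)$ for every $e$. That is exactly $\bnd(G,E)(e)=\emptyset$ for all $e$, i.e.\ $\bnd(G,E)=\softnull$.

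\textbf{Main obstacle.} The only delicate point is the finiteness needed in (vii). The nontrivial direction, $L=U\Rightarrow$ termwise equality, uses that a sum of nonnegative termwise differences vanishes only if each difference does, and that equal cardinalities of nested finite sets force the sets to be equal. I would state the finiteness assumption explicitly in the proof. Everything else is a direct citation of earlier results.
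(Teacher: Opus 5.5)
Your proposal is correct and takes the paper's route: the paper's entire proof is the identification $\apr=\inte$, $\APR=\cla$ plus a citation of Theorems~\ref{thm:closureprops}--\ref{thm:duality}. Your write-up is more complete, since $\apr(\softnull)=\softnull$, $\APR(\softabs)=\softabs$, and the counting argument for (vii) (with the finiteness assumption implicit in the definition of $\rho_\aur$) are not literally contained in those theorems, and you supply each of them correctly.
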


\begin{proof}
All properties follow from Theorems~\ref{thm:closureprops}--\ref{thm:duality}, since $\apr = \inte$ and $\APR = \cla$.
\end{proof}

\begin{remark}\label{rem:pawlak}
When the soft scope function arises from an equivalence relation $R$ via $\aur_E(x)(e) = [x]_R$ (constant in $e$), the operators reduce to Pawlak's approximations \cite{Pawlak}. The soft aura model generalizes Pawlak's framework without requiring an equivalence relation, while the parameterization provides multi-criteria resolution.
\end{remark}

\section{Application: Environmental Risk Assessment}\label{sec:application}

We illustrate the soft aura rough approximation framework with an environmental risk assessment problem.

\textbf{Problem setting.} Let $X = \{s_1, s_2, s_3, s_4, s_5\}$ represent five environmental monitoring stations. The parameter set $E = \{e_1, e_2, e_3, e_4\}$ represents environmental quality indicators: $e_1$ = PM$_{2.5}$ concentration, $e_2$ = SO$_2$ concentration, $e_3$ = water pH level, and $e_4$ = dissolved oxygen (DO) level.

\textbf{Soft scope function.} Each station's ``environmental influence zone'' is modeled by a soft scope function $\aur_E$:
\begin{center}
\begin{tabular}{c|cccc}
\toprule
$\aur_E$ & $e_1$ (PM$_{2.5}$) & $e_2$ (SO$_2$) & $e_3$ (pH) & $e_4$ (DO) \\
\midrule
$s_1$ & $\{s_1, s_2\}$ & $\{s_1, s_4\}$ & $\{s_1, s_2\}$ & $\{s_1, s_3\}$ \\
$s_2$ & $\{s_2, s_4\}$ & $\{s_2, s_3\}$ & $\{s_2, s_5\}$ & $\{s_2, s_3\}$ \\
$s_3$ & $\{s_3, s_5\}$ & $\{s_3, s_5\}$ & $\{s_3, s_5\}$ & $\{s_3, s_4\}$ \\
$s_4$ & $\{s_1, s_4\}$ & $\{s_4, s_5\}$ & $\{s_3, s_4\}$ & $\{s_4, s_5\}$ \\
$s_5$ & $\{s_4, s_5\}$ & $\{s_1, s_5\}$ & $\{s_2, s_5\}$ & $\{s_1, s_5\}$ \\
\bottomrule
\end{tabular}
\end{center}

One verifies $s_i \in \aur_E(s_i)(e_j)$ for all $i, j$.

\textbf{Target classification.} Let $(G, E)$ represent ``at-risk'' stations that fail quality standards:
\[
G(e_1) = \{s_3, s_5\},\quad G(e_2) = \{s_2\},\quad G(e_3) = \{s_1, s_4\},\quad G(e_4) = \{s_4, s_5\}.
\]

\textbf{Lower approximation} $\apr(G, E)(e) = \{x : \aur_E(x)(e) \subseteq G(e)\}$:
\begin{itemize}
\item $e_1$: $\aur_E(s_3)(e_1) = \{s_3, s_5\} \subseteq \{s_3, s_5\}$; no other scope is contained. $\apr(e_1) = \{s_3\}$.
\item $e_2$: No scope $\subseteq \{s_2\}$. $\apr(e_2) = \emptyset$.
\item $e_3$: No scope $\subseteq \{s_1, s_4\}$. $\apr(e_3) = \emptyset$.
\item $e_4$: $\aur_E(s_4)(e_4) = \{s_4, s_5\} \subseteq \{s_4, s_5\}$. $\apr(e_4) = \{s_4\}$.
\end{itemize}

\textbf{Upper approximation} $\APR(G, E)(e) = \{x : \aur_E(x)(e) \cap G(e) \neq \emptyset\}$:
\begin{itemize}
\item $e_1$: $s_3$ ($\{s_3, s_5\} \cap \{s_3, s_5\} \neq \emptyset$), $s_5$ ($\{s_4, s_5\} \cap \{s_3, s_5\} = \{s_5\}$). $\APR(e_1) = \{s_3, s_5\}$.
\item $e_2$: $s_2$ ($\{s_2, s_3\} \cap \{s_2\} = \{s_2\}$). $\APR(e_2) = \{s_2\}$.
\item $e_3$: $s_1$ ($\{s_1, s_2\} \cap \{s_1, s_4\} = \{s_1\}$), $s_4$ ($\{s_3, s_4\} \cap \{s_1, s_4\} = \{s_4\}$). $\APR(e_3) = \{s_1, s_4\}$.
\item $e_4$: $s_3$ ($\{s_3, s_4\} \cap \{s_4, s_5\} = \{s_4\}$), $s_4$ ($\{s_4, s_5\} \cap \{s_4, s_5\}$), $s_5$ ($\{s_1, s_5\} \cap \{s_4, s_5\} = \{s_5\}$). $\APR(e_4) = \{s_3, s_4, s_5\}$.
\end{itemize}

\textbf{Summary:}
\begin{center}
\begin{tabular}{c|cccc}
\toprule
& $e_1$ (PM$_{2.5}$) & $e_2$ (SO$_2$) & $e_3$ (pH) & $e_4$ (DO) \\
\midrule
$(G, E)$ & $\{s_3, s_5\}$ & $\{s_2\}$ & $\{s_1, s_4\}$ & $\{s_4, s_5\}$ \\
$\apr(G, E)$ & $\{s_3\}$ & $\emptyset$ & $\emptyset$ & $\{s_4\}$ \\
$\APR(G, E)$ & $\{s_3, s_5\}$ & $\{s_2\}$ & $\{s_1, s_4\}$ & $\{s_3, s_4, s_5\}$ \\
$\bnd(G, E)$ & $\{s_5\}$ & $\{s_2\}$ & $\{s_1, s_4\}$ & $\{s_3, s_5\}$ \\
\bottomrule
\end{tabular}
\end{center}

\textbf{Accuracy measure:}
\[
\rho_\aur(G, E) = \frac{1 + 0 + 0 + 1}{2 + 1 + 2 + 3} = \frac{2}{8} = 0.25.
\]

\textbf{Interpretation.} The accuracy measure $\rho_\aur = 0.25$ indicates significant boundary regions, reflecting genuine uncertainty in the environmental risk classification. Station $s_3$ is \emph{definitively at risk} for PM$_{2.5}$ and station $s_4$ for dissolved oxygen (both appear in the lower approximation). The remaining classifications involve uncertainty due to overlapping environmental influence zones. The parameter-wise analysis identifies which indicators contribute most to uncertainty (pH and SO$_2$ have empty lower approximations), guiding targeted investigation and resource allocation.

\section{Conclusion}\label{sec:conclusion}

We have introduced soft aura topological spaces by equipping a soft topological space with a soft scope function. The main contributions are:

The soft aura-closure operator $\cla$ is a soft additive \v{C}ech closure operator whose transfinite iteration yields a soft Kuratowski closure $\cla^\infty$, generating a chain of soft topologies $\stau_\aur^\infty \subseteq \stau_\aur$.

Five classes of generalized soft open sets form the complete hierarchy: soft $\aur$-open $\Rightarrow$ soft $\aur$-$\alpha$-open $\Rightarrow$ soft $\aur$-semi-open / soft $\aur$-pre-open $\Rightarrow$ soft $\aur$-$b$-open $\Rightarrow$ soft $\aur$-$\beta$-open.

A distinctive discovery is the collapse of soft $\aur$-$T_1$ and soft $\aur$-$T_2$: the scope-based $T_1$ condition forces all scopes to be singletons, automatically yielding $T_2$. The genuinely soft phenomenon is the wide gap between $T_0$ and $T_1$, created by the existential-versus-universal quantification over parameters.

Soft aura rough approximation operators generalize both the crisp aura model \cite{AcikgozAura} and the classical Pawlak model \cite{Pawlak}. The environmental risk assessment application demonstrates parameter-dependent classification with practical interpretive value.

Several directions remain open: soft aura-compactness and connectedness; combining the soft scope function with a soft ideal; extending to intuitionistic fuzzy soft and neutrosophic soft settings; and developing comprehensive decision-making algorithms based on soft aura rough approximations.


\end{document}